\documentclass[11pt,dvipsnames]{article}
\usepackage{amsmath,amsfonts,amssymb,amsthm,graphicx,verbatim,subfig,mathtools,wasysym}


\usepackage[usenames]{xcolor}
\usepackage[all]{xy}
\usepackage[a4paper,left=25mm]{geometry}
\setlength{\topmargin}{-1cm} \setlength{\textheight}{22cm}
\usepackage[colorlinks=true, linkcolor=black, citecolor=black]{hyperref}

\definecolor{darkred}{RGB}{200,0,0}
\definecolor{zurz}{RGB}{180,20,80}

\ifx\pdftexversion\undefined


\usepackage[a4paper,colorlinks,linkcolor=black,filecolor=black,citecolor=black,urlcolor=black,pdfstartview=FitH]{hyperref}
\fi

\font\sixbb=msbm6
\font\eightbb=msbm8
\font\twelvebb=msbm10 scaled 1095
\newfam\bbfam
\textfont\bbfam=\twelvebb \scriptfont\bbfam=\eightbb
                           \scriptscriptfont\bbfam=\sixbb
\def\bb{\fam\bbfam\twelvebb}
\newcommand{\Rea}{\mathbb{R}}

\newcommand{\Int}{{\bb Z}}

\newcommand{\FF}{{\bb F}}

\newcommand{\PPR}{\mathbb{P}}

\DeclareMathAlphabet\EuR{U}{eur}{m}{n}
\SetMathAlphabet\EuR{bold}{U}{eur}{b}{n}


\newtheorem{theorem}{Theorem}[section]

\newtheorem{conjecture}[theorem]{Conjecture}
\newtheorem{proposition}[theorem]{Proposition}
\newtheorem{corollary}[theorem]{Corollary}
\newtheorem{lemma}[theorem]{Lemma}
\newtheorem{definition}[theorem]{Definition}
\newtheorem{example}[theorem]{Example}

\newcommand{\enp}{\begin{flushright} $\Box$ \end{flushright}}
\newcommand{\beq}[0]{\begin{equation}}
\newcommand{\enq}[0]{\end{equation}}
\newcommand{\dn}{\Delta_{n-1}}
\newcommand{\rk}{{\rm rank}}
\newcommand{\thh}{\tilde{H}}

\newcommand{\pr}{{\rm Pr}}

\newcommand{\lk}{{\rm lk}}

\newcommand{\inj}{\text{Inj}}
\newcommand{\sd}{\mathrm{sd}\,}

\newcommand{\ccs}{\mathbb{S}}
\newcommand{\tilb}{\tilde{\beta}}
\newcommand{\hd}{{\rm hd}}

\title{On the topology of complexes of injective words}
\author{Wojtek Chacholski\thanks{Matematik, KTH, 10054 Stockholm, Sweden, email: wojtek@kth.se~. Supported by VR grant 2014-04770.}
\and Ran Levi\thanks{Institute of Mathematics, University of Aberdeen, Aberdeen, UK, email: r.levi@abdn.ac.uk~. Supported by EPSRC grant EP/P025072/1}
\and Roy Meshulam \thanks{Department of Mathematics,
Technion, Haifa 32000, Israel. e-mail:
meshulam@technion.ac.il~. Supported by ISF grant 326/16.}}

\begin{document}
\maketitle

\begin{abstract}
An injective word over a finite alphabet $V$ is a sequence $w=v_1v_2\cdots v_t$ of distinct elements of $V$. The set $\inj(V)$ of injective words on $V$ is partially ordered by inclusion. A complex of injective words is the order complex $\Delta(W)$ of a subposet $W \subset \inj(V)$. Complexes of injective words arose recently in applications of algebraic topology to neuroscience, and are of independent interest in topology and combinatorics. In this article we mainly study Permutation Complexes, i.e. complexes of injective words $\Delta(W)$, where $W$ is the downward closed subposet of  $\inj(V)$ generated by a set of permutations of $V$. In particular, we determine the homotopy type of $\Delta(W)$ when $W$ is generated by two permutations, and prove that any stable homotopy type is realizable by a permutation complex. We describe a homotopy decomposition for the complex of injective words $\Gamma(K)$ associated with a simplicial complex $K$, and point out a connection to a result of Randal-Williams and Wahl. Finally, we discuss some probabilistic aspects of random permutation complexes.
\end{abstract}

\section{Introduction}
Numerous ideas from combinatorial topology recently found applications in science and technology. A particular example occurred in studying the Blue Brain Project reconstruction of the so called neocortical column of a 14 days old rat \cite{Megapaper}. In this model, as well as in a biological brain, one frequently finds neurons that are reciprocally connected. In \cite{RN-16} the authors consider the directed graphs that emerge from the structure and function of the model, and to each such graph they associate the so called ``directed flag complex".  This is a topological object made out of the directed cliques in the graph. The topological properties of the resulting spaces turn out to reveal interesting information about the reconstruction. This paper arose from considering the possible homotopy types of these directed flag complexes.

We proceed with some formal definitions.
Let $V$ be a finite set. An {\it injective word} $w$ on $V$ of length $|w|=k$ is an ordered $k$-tuple $v_1\cdots v_k$ such that $v_i \in V$ and $v_i \neq v_j$ for $1 \leq i \neq j \leq k$. Let $\inj(V)$ denote the set of nonempty injective words on $V$, partially ordered by inclusion, i.e.
$w=v_1 \cdots v_k \leq w'=v_1' \cdots v_{\ell}'$ if there exist $1 \leq t_1<\cdots<t_k \leq  \ell$ such that
$v_j=v_{t_j}'$ for $1 \leq j \leq k$.  The {\it order complex} of a poset
$(P,\leq)$, denoted by $\Delta(P)$, is the simplicial complex on the vertex set $P$, whose $k$-simplices are the chains $x_0< \cdots <x_k$ of $P$. For $P=[n]=\{1,\ldots,n\}$ with its usual ordering, $\Delta([n])=\Delta_{n-1}$ is the standard $(n-1)$-simplex.
\begin{definition}
\label{d:osc}
A \emph{complex of injective words} is an order complex $\Delta(W)$ associated with a set of injective words $W \subset \inj(V)$.
\end{definition}

Complexes of injective words play a role in a number of areas, ranging from topological combinatorics
(see e.g. \cite{BW83}) to homological stability of groups (see e.g. \cite{RW-W17}).
Recently low dimensional examples of these complexes were discovered while studying a digital reconstruction of brain tissue of a rat  \cite{RN-16}.
Let $\ccs_n$ denote the symmetric group on $[n]$.
For $n \geq 1$ let $D(n)=\sum_{j=0}^n (-1)^j \frac{n!}{j!}$ be the number of derangements (i.e. fixed point free permutations) in $\ccs_n$. Farmer \cite{Farmer78} proved that if $|V|=n$, then $\Delta(\inj(V))$ has the homology of a wedge of $D(n)$ copies of the $(n-1)$-sphere $S^{n-1}$. The following homotopical strengthening of Farmer's theorem was obtained by Bj\"{o}rner and Wachs \cite{BW83}.
\begin{theorem}[\cite{BW83}]
\label{t:bw}
$$\Delta(\inj([n])) \simeq \bigvee_{D(n)} S^{n-1}.$$
\end{theorem}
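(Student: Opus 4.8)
The plan is to realise $\Delta(\inj([n]))$ as (the barycentric subdivision of) a shellable, pure $(n-1)$-dimensional complex, invoke the fact that such a complex is homotopy equivalent to a wedge of $(n-1)$-spheres, and then read off the number of spheres from an Euler characteristic computation. First I would pass to a more economical model: let $\Gamma_n$ denote the regular CW complex whose $(k-1)$-cells are the injective words of length $k$ on $[n]$, the closure of the cell labelled $w$ being glued to the cells indexed by the proper subwords of $w$; one checks this is a legitimate regular CW structure in which every closed cell is combinatorially a simplex. Its face poset is precisely $(\inj([n]),\le)$ with the subword order, so $\Delta(\inj([n]))$ is the order complex of the face poset of $\Gamma_n$, hence homeomorphic to $\Gamma_n$. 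Since the maximal cells of $\Gamma_n$ are the permutations of $[n]$, all of dimension $n-1$, the complex $\Gamma_n$ is pure of dimension $n-1$, and it suffices to show it is shellable and to count its homology facets.

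For the shelling I would order the facets (the permutations) lexicographically, reading $\sigma=\sigma_1\cdots\sigma_n$ from left to right, and write $\sigma\setminus i$ for the ridge of $\overline\sigma$ obtained by deleting the $i$-th letter of $\sigma$. The first facet is the identity word $12\cdots n$, whose closed cell is a ball. If $\sigma$ comes later, it has a descent $\sigma_i>\sigma_{i+1}$; transposing these two entries produces a permutation $\tau<_{\mathrm{lex}}\sigma$ that contains $\sigma\setminus i$ as a subword, so the ridge $\sigma\setminus i$ already lies in $\Delta_{<\sigma}:=\bigcup_{\tau<_{\mathrm{lex}}\sigma}\overline\tau$. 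In fact $\sigma\setminus i\in\Delta_{<\sigma}$ exactly when $\sigma_i$ is not a left-to-right maximum of $\sigma$, or $i<n$ and $\sigma_i>\sigma_{i+1}$; granting the combinatorial claim that $\overline\sigma\cap\Delta_{<\sigma}$ is then exactly the union of those ridges, we obtain a nonempty pure $(n-2)$-dimensional subcomplex of the sphere $\partial\overline\sigma\cong\partial\Delta_{n-1}$. Because any nonempty union of facets of the boundary complex of a simplex is either that whole sphere or a shellable ball, this is a valid shelling step; hence $\Gamma_n$ is shellable.

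It then follows from the standard theory of shellable complexes that $\Gamma_n\simeq\Delta(\inj([n]))\simeq\bigvee_N S^{n-1}$ for some $N\ge 0$. The number $N$ equals $(-1)^{n-1}\tilde\chi(\Gamma_n)$, and since $\Gamma_n$ has $n!/(n-k)!$ cells of dimension $k-1$,
$$\tilde\chi(\Gamma_n)=-1+\sum_{k=1}^{n}\frac{(-1)^{k-1}n!}{(n-k)!}=-1+(-1)^{n-1}\!\!\sum_{j=0}^{n-1}\frac{(-1)^{j}n!}{j!}=(-1)^{n-1}D(n),$$
where the last equality uses $\sum_{j=0}^{n-1}(-1)^{j}n!/j!=D(n)-(-1)^{n}$. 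Hence $N=D(n)$, which is the assertion.

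I expect the one genuinely nontrivial point to be the combinatorial claim in the second paragraph: producing a single old ridge from a descent is immediate, but checking that $\overline\sigma\cap\Delta_{<\sigma}$ carries no lower-dimensional ``spurious'' faces — i.e. that it really is pure of codimension one — requires the lemma, special to the lexicographic order, that a proper subword $v$ of $\sigma$ which embeds into some lexicographically smaller permutation must already embed into one of the ridges $\sigma\setminus i$ that themselves embed into a smaller permutation. Everything downstream of shellability — the wedge-of-spheres conclusion and the counting — is routine. One could instead phrase the entire argument as CL-shellability of the bounded poset $\inj([n])\cup\{\hat 0,\hat 1\}$, which amounts to the same combinatorial work.
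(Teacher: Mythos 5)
The paper offers no proof of this statement: it is quoted verbatim from Bj\"orner--Wachs \cite{BW83}, so there is no internal argument to compare against. Your route --- model $\Delta(\inj([n]))$ as the barycentric subdivision of the pure $(n-1)$-dimensional Boolean cell complex $\Gamma_n$ whose cells are the injective words, shell it, and count the spheres via the reduced Euler characteristic --- is essentially the argument of the cited source (which phrases the shelling as CL-shellability of $\inj([n])\cup\{\hat 0,\hat 1\}$, as you note). The identification of $\Delta(\inj([n]))$ with $\Gamma_n$, the criterion for which ridges $\sigma\setminus i$ lie in $\Delta_{<\sigma}$ (namely: $\sigma_i$ fails to be a left-to-right maximum, or $i<n$ and $\sigma_i>\sigma_{i+1}$), and the Euler characteristic computation giving $N=D(n)$ are all correct.

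The one step you grant without proof --- that $\overline\sigma\cap\Delta_{<\sigma}$ is exactly the union of the old ridges, with no spurious lower-dimensional faces --- is true, and can be closed as follows. A proper face of $\overline\sigma$ is $v=\sigma|_S$ for a proper subset $S\subsetneq[n]$ of positions, and $v$ lies in an old ridge iff some $i\in T=[n]\setminus S$ is an ``old'' position. So suppose every $i\in T$ is ``new,'' i.e.\ $\sigma_i$ is a left-to-right maximum and either $i=n$ or $\sigma_i<\sigma_{i+1}$; one shows $\sigma$ is then the lexicographically \emph{smallest} permutation containing $v$, hence $v\notin\Delta_{<\sigma}$. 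Indeed, run the greedy construction of the lex-minimal permutation containing $v$: at a position $i\in S$ the only unused letters smaller than $\sigma_i$ sit at later positions of $S$ (any later position in $T$ carries a left-to-right maximum, hence a letter exceeding $\sigma_i$), so they are forbidden and the greedy choice is $\sigma_i$; at a position $i\in T$ the letter $\sigma_i$ is smaller than every later $T$-letter (left-to-right maxima again) and smaller than the next unmatched letter of $v$, since the positions strictly between $i$ and $\min(S\cap(i,n])$ all lie in $T$ and the ascent condition chains to give $\sigma_i<\sigma_{i+1}<\cdots<\sigma_{\min(S\cap(i,n])}$. Thus the greedy algorithm reproduces $\sigma$, the intersection is pure of codimension one, the lex order is a genuine shelling, and the rest of your argument (wedge of $(n-1)$-spheres for a shellable pure Boolean cell complex, then the count $N=(-1)^{n-1}\tilde\chi=D(n)$) goes through.
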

\noindent
In this paper we study several aspects of complexes of injective words.
For $w_1,\ldots,w_m \in \inj(W)$ let
$$W(w_1,\ldots,w_m)=\{w \in \inj(V):w \leq w_i \text{~for~some~} 1 \leq i \leq m\}.$$
The \emph{complex of injective words generated by $w_1,\ldots,w_m$} is $$X(w_1,\ldots,w_m)=\Delta(W(w_1,\ldots,w_m)).$$
For $p \geq 0$ let $A_p(w_1,\ldots,w_m)$ denote the free abelian group generated by all
words $w \in W(w_1,\ldots,w_m)$ of length $p+1$. Let $\partial_p:A_p(w_1,\ldots,w_m) \rightarrow A_{p-1}(w_1,\ldots,w_m)$ be the $p$-boundary map given by
$\partial_p(v_0\cdots v_p)=\sum_{j=0}^p (-1)^j (v_0\cdots v_{j-1}v_{j+1} \cdots v_p)$.
Farmer \cite{Farmer78} proved the following
\begin{theorem}[\cite{Farmer78}]
\label{t:acom}
The homology of $X(w_1,\ldots,w_m)$ is isomorphic to the homology of the chain complex
$\bigoplus_{p \geq 0} A_p(w_1,\ldots,w_m)$.
\end{theorem}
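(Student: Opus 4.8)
The plan is to identify $\bigoplus_{p\ge 0}A_p(w_1,\ldots,w_m)$ with the chain complex of a regular CW complex whose barycentric subdivision is exactly $X(w_1,\ldots,w_m)$. Write $W=W(w_1,\ldots,w_m)$; by construction $W$ is downward closed in $\inj(V)$, since $w\le w_i$ and $w'\le w$ force $w'\le w_i$. First I would form the semi-simplicial set $\Gamma$ whose set of $p$-simplices is $\{\,w\in W : |w|=p+1\,\}$ and whose face operators, for $0\le j\le p$, send $v_0\cdots v_p$ to $v_0\cdots v_{j-1}v_{j+1}\cdots v_p$; these stay inside $W$ precisely because $W$ is downward closed, so $\Gamma$ is well defined. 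Its geometric realization $|\Gamma|$ is then a CW complex with exactly one $p$-cell for every word of length $p+1$ in $W$, and, straight from the definition, the simplicial chain complex of this $\Delta$-complex is $\bigl(\bigoplus_{p\ge 0}A_p(w_1,\ldots,w_m),\partial\bigr)$ with $\partial$ the alternating sum of letter-deletions written in the theorem. Since the simplicial homology of a $\Delta$-complex computes its singular homology, $H_*\bigl(\bigoplus_p A_p(w_1,\ldots,w_m)\bigr)\cong H_*(|\Gamma|)$.

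The substantive step is to verify that $|\Gamma|$ is a \emph{regular} CW complex, and this is exactly where injectivity of the words is used. For a fixed word $w=v_0\cdots v_p$, the assignment sending a subset $S\subseteq\{0,\ldots,p\}$ to the subword of $w$ supported on $S$ is injective, because the letters $v_0,\ldots,v_p$ are pairwise distinct; hence the characteristic map of the $p$-cell indexed by $w$ carries distinct open faces of the standard $p$-simplex $\Delta^p$ onto distinct open cells of $|\Gamma|$ and is therefore injective, so that closed cell is an embedded simplex. Thus $|\Gamma|$ is regular, and its face poset (cells ordered by inclusion of closures) is canonically identified with $(W,\le)$, since the cell of $w'$ lies in the closure of the cell of $w$ if and only if $w'$ is a subword of $w$, that is, $w'\le w$.

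Finally I would invoke the classical fact that a regular CW complex is homeomorphic, via the map carrying barycenters of cells to barycenters of simplices, to the geometric realization of the order complex of its face poset. Applied to $\Gamma$ this gives $|\Gamma|\cong\Delta(W)=X(w_1,\ldots,w_m)$, which together with the first paragraph produces the desired isomorphism $H_*\bigl(X(w_1,\ldots,w_m)\bigr)\cong H_*\bigl(\bigoplus_p A_p(w_1,\ldots,w_m)\bigr)$; in fact the argument yields a homeomorphism of spaces, not merely a homology isomorphism. The one genuine obstacle is the regularity verification: without the injectivity hypothesis the face poset of the analogous $\Gamma$ can fail to recover it (for instance, allowing a repeated letter as in the length-two ``word'' $aa$ glues a $1$-cell into a circle on a single vertex, whose face poset is merely a two-element chain), so it is precisely injectivity that forces $\Delta(W)$ to be a subdivision of $|\Gamma|$. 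As an alternative one could instead work at chain level and construct the subdivision chain homotopy equivalence $\bigoplus_p A_p(w_1,\ldots,w_m)\to C_*\bigl(X(w_1,\ldots,w_m)\bigr)$ by hand, but this amounts to the same subdivision picture.
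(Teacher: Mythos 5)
Your proof is correct. The paper itself gives no argument for this statement---it is quoted from Farmer's paper on cellular homology for posets---so there is no in-text proof to compare against; but your route is essentially the canonical one (and essentially Farmer's): the words of $W(w_1,\ldots,w_m)$ index the cells of the realization of a semi-simplicial set, this CW complex is \emph{regular} precisely because the letters of each word are distinct (the point you correctly isolate as the crux), its face poset is $(W(w_1,\ldots,w_m),\leq)$, and the order complex of the face poset of a regular CW complex is its barycentric subdivision, hence homeomorphic to it; meanwhile the simplicial chain complex of the semi-simplicial set is exactly $\bigl(\bigoplus_{p}A_p(w_1,\ldots,w_m),\partial\bigr)$ and computes the singular homology of the realization. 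The one place you could add a half-sentence is in the regularity check: injectivity of the characteristic map on the closed $p$-cell needs both that distinct open faces of $\Delta^p$ map to distinct open cells (your subword argument) and that the map is injective on each open face, the latter being the standard fact that every open simplex of a semi-simplicial set embeds in the realization. Your closing remark that the argument actually yields a homeomorphism $X(w_1,\ldots,w_m)\cong|\Gamma|$, not merely a homology isomorphism, is also accurate.
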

\noindent
A permutation $\sigma \in \ccs_n$ will be identified with the injective word
$\sigma(1)\cdots\sigma(n) \in \inj([n])$.
\begin{definition}
\label{d:uop}
A \emph{permutation complex} on $V=[n]$ is a complex of the form
$X(\sigma_1,\ldots,\sigma_m)$, where $\sigma_1,\ldots,\sigma_m \in \ccs_n$.
\end{definition}
\noindent
Any simplicial complex $Y$ on the vertex set $[n]$ is homeomorphic to a
subcomplex of $\Delta(\inj([n]))$.
Indeed, let $(P(Y),\prec)$ denote the poset of nonempty simplices of $Y$ ordered by inclusion. The {\it barycentric subdivision} $\sd Y$ of $Y$, is the order complex $\Delta(P(Y))$.
Associate with each nonempty simplex $\sigma \in Y$, the word $w_{\sigma}$ that lists the vertices of $\sigma$ in the natural order on $[n]$. Then the map $\sigma \rightarrow w_{\sigma}$ is a simplicial isomorphism between $\sd Y$ and $\Delta(\{w_{\sigma}: \sigma \in Y\})$. On the other hand, homeomorphism types of permutation complexes are considerably more restricted. For example, the claw graph $G$ with edges $12,13,14$ is not homeomorphic to any permutation complex.
Our first result shows however that the stable homotopy type of any finite simplicial complex is realizable by a permutation complex. Let $\Sigma Y=S^0 * Y$ denote the (unreduced) suspension of a space $Y$. Note that
$\Sigma \emptyset =S^0$.
\begin{theorem}
\label{t:uniform}
For any finite simplicial complex $Y$ there exist positive $r,n,m$ and premutations
$\sigma_1,\ldots,\sigma_m \in \ccs_n$ such that
$$\Sigma^r Y  \simeq X(\sigma_1,\ldots \sigma_m).$$
\end{theorem}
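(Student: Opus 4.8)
Since $\Sigma^r Y\simeq\Sigma^r(\sd Y)$ and $\sd Y=X(f_1,\dots,f_m)$, where $f_i$ lists the vertices of the $i$-th facet of $Y$ in increasing order, it suffices to exhibit, for a suitable $r$, a permutation complex homotopy equivalent to $\Sigma^r X(f_1,\dots,f_m)$ (we may assume $Y$ is pure, being homotopy equivalent to a pure complex of the same dimension). The first ingredient is a contractibility criterion: if injective words $w_1,\dots,w_p$ over an alphabet $V$ all end with the same letter $z$, or all begin with the same letter, then $X(w_1,\dots,w_p)$ is contractible. I would prove this by checking that $w\mapsto wz$ when $z\notin w$ and $w\mapsto w$ otherwise is a closure operator on the poset $W(w_1,\dots,w_p)$; a closure operator induces a homotopy equivalence of order complexes onto the order complex of its fixed-point poset, and the latter has the vertex $z$ as a least element, so its order complex is a cone.

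The second ingredient is a suspension formula: for injective words $w_1,\dots,w_p$ over $V$ and two letters $a\neq b$ outside $V$,
\[
X(w_1a,\dots,w_pa,\ w_1b,\dots,w_pb)\ \simeq\ \Sigma\, X(w_1,\dots,w_p).
\]
The left side is the union of $X(w_1a,\dots,w_pa)$ and $X(w_1b,\dots,w_pb)$, each contractible by the criterion above, and a word over $V\cup\{a,b\}$ can be a common subword of some $w_ia$ and some $w_jb$ only if it avoids both $a$ and $b$, so the two pieces meet exactly in $X(w_1,\dots,w_p)$; a union of two contractible complexes along a subcomplex $A$ is homotopy equivalent to $\Sigma A$. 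Iterating this $r$ times starting from $X(f_1,\dots,f_m)$ yields $\Sigma^r X(f_1,\dots,f_m)\simeq X\big(\{f_i\,c_1\cdots c_r:c_j\in\{a_j,b_j\}\}\big)$ over the alphabet $[k]\cup\{a_1,b_1,\dots,a_r,b_r\}$.

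The third ingredient is a permutation-compatible variant. If $\sigma_1,\dots,\sigma_p$ are permutations of a set $U$ and $a\neq b$ are new, then $\sigma_iab$ and $\sigma_iba$ are permutations of $U\cup\{a,b\}$; the families grouped by the trailing block $ab$, resp.\ $ba$, have a common last letter and hence span contractible complexes; the intersection poset consists of $W(\sigma_1,\dots,\sigma_p)$ together with two ``suffix branches'' $\{ua\}$ and $\{ub\}$ with $u\in W(\sigma_1,\dots,\sigma_p)$ (no word can carry both orderings of $a$ and $b$), whose order complex is once more a suspension of $X(\sigma_1,\dots,\sigma_p)$ by a further gluing of two contractibles. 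Thus $X(\sigma_1ab,\dots,\sigma_pab,\ \sigma_1ba,\dots,\sigma_pba)\simeq\Sigma^2X(\sigma_1,\dots,\sigma_p)$ is a permutation complex, and it would be enough to produce a single permutation complex homotopy equivalent to $\Sigma^sY$ for some $s$; repeated use of this formula then realizes $\Sigma^{s+2}Y,\Sigma^{s+4}Y,\dots$ by permutation complexes.

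I expect the main obstacle to be precisely this seed step. Completing the facet words to permutations forces all $k$ vertices of $Y$ to appear in every generator, which a priori introduces many spurious subwords and alters the homotopy type unpredictably. The plan is to split the completion into two families obtained from the $f_i$ by appending the complementary vertices — interleaved with a couple of new letters inserted in two opposite orders — each family arranged to have a common first letter and so to span a contractible complex, and then to identify the order complex of their intersection. The delicate point is to arrange the completion (using a fixed linear order on the complementary vertices, together with the homotopy-invariance of appending letters from pairwise disjoint fresh alphabets) so that this intersection collapses onto $\sd Y$ up to the expected number of suspensions, rather than onto a larger complex; establishing that collapse by an explicit closure operator or a discrete Morse argument is where the real difficulty should lie.
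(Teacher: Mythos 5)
Your auxiliary machinery is sound: the closure-operator proof that a family of words sharing a last (or first) letter generates a contractible complex is correct, the suspension formula for appending two fresh letters is correct, and the observation that $X(\sigma_1ab,\ldots,\sigma_pab,\sigma_1ba,\ldots,\sigma_pba)\simeq\Sigma^2X(\sigma_1,\ldots,\sigma_p)$ gives a valid permutation-preserving double suspension. But as you yourself acknowledge, all of this only steps up from an already-realized suspension, and the proposal never produces the seed: a single permutation complex homotopy equivalent to \emph{some} $\Sigma^sY$. Your sketched plan for the seed (complete the facet words $f_i$ to permutations by appending the complementary vertices, split into two families with a common first letter, and hope the intersection collapses onto $\sd Y$) is not carried out and, as described, runs into exactly the problem you flag: the completed words admit many subwords mixing facet vertices with complementary vertices, and there is no evident closure operator or collapse onto $\sd Y$. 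This is a genuine gap, not a technicality.

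The idea you are missing is to change the ground set and use order dimension. The paper works with the face poset $P(Y)$ (so the alphabet is the set of \emph{faces} of $Y$, of size $n=|P(Y)|$, not the vertex set), chooses $d=\dim(P(Y))$ linear extensions $\tau_1,\ldots,\tau_d\in\ccs_n$ whose intersection is the face-containment order, and observes (Lemma \ref{Lem:Intersection=sd(Q)}) that $\bigcap_k X(\tau_k)\cong\sd\Delta(Q(\tau_1,\ldots,\tau_d))=\sd\sd Y$ --- here each generator is a full permutation from the start, every $X(\tau_k)$ is automatically contractible, and the intersection is identified exactly, with no spurious subwords. The remaining issue is that Lemma \ref{c:union} (the $d$-fold analogue of your two-contractibles-glued-along-$A$ step) needs \emph{all} proper sub-intersections $\bigcap_{i\in I}X(\tau_i)$ to be contractible, which need not hold; this is fixed by concatenating each $\tau_k$ with a transposition block $\eta_k\in\ccs_{2d}$ (in the spirit of your $ab/ba$ trick, but with $d$ pairwise disjoint blocks), which makes every proper intersection a cone while turning the full intersection into $\sd Y * S^{d-1}\simeq\Sigma^dY$; the union is then $\Sigma^{d-1}$ of that, i.e.\ $\Sigma^{2d-1}Y$. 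In short: your stepping-up lemmas are fine, but the theorem stands or falls on the seed construction, and the order-dimension representation of $P(Y)$ as an intersection of single-permutation complexes is the ingredient your proposal lacks.
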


The complex $X(\sigma)$ on a single permutation $\sigma \in \ccs_n$ is clearly isomorphic to the barycentric subdivision of the $(n-1)$-simplex $\Delta_{n-1}$, and hence contractible. Our second result is a characterization of the homotopy type
of complexes generated by two permutations.
\begin{theorem}
\label{t:twoper}
$~$
\begin{itemize}
\item[(a)]
For any $\sigma_1,\sigma_2 \in \ccs_n$, the permutation complex $X(\sigma_1,\sigma_2)$ is either contractible or is homotopy equivalent to a wedge of spheres of dimensions at least $1$.
\item[(b)]
For any $1 \leq k_1,\ldots,k_m$ there exist $\sigma_1,\sigma_2 \in \ccs_n$ such that
\begin{equation}
\label{e:disp}
X(\sigma_1,\sigma_2) \simeq S^{k_1} \vee \cdots \vee S^{k_m}.
\end{equation}
\end{itemize}
\end{theorem}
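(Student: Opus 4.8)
\emph{Reduction to a suspension (for both parts).} After relabelling $V=[n]$ by $\sigma_1^{-1}$ we may assume $\sigma_1=12\cdots n$, and we set $\pi=\sigma_1^{-1}\sigma_2\in\ccs_n$. Since a subword of $12\cdots n$ is determined by its underlying letter set, $W(\sigma_1)$ is the face poset of the simplex $\dn$ on $[n]$, so $X(\sigma_1)\cong\sd\dn$; likewise $X(\sigma_2)\cong\sd\dn$, and both are contractible. Moreover $X(\sigma_1,\sigma_2)=X(\sigma_1)\cup X(\sigma_2)$, because any chain in $W(\sigma_1)\cup W(\sigma_2)$ lies entirely in $W(\sigma_1)$ or entirely in $W(\sigma_2)$, according to whether its unique maximal element is $\le\sigma_1$ or $\le\sigma_2$. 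As inclusions of subcomplexes are cofibrations, $X(\sigma_1,\sigma_2)$ is the homotopy pushout of $X(\sigma_1)\leftarrow X(\sigma_1)\cap X(\sigma_2)\rightarrow X(\sigma_2)$, and since the outer terms are contractible,
$$X(\sigma_1,\sigma_2)\ \simeq\ \Sigma\bigl(X(\sigma_1)\cap X(\sigma_2)\bigr).$$
Finally $X(\sigma_1)\cap X(\sigma_2)=\Delta\bigl(W(\sigma_1)\cap W(\sigma_2)\bigr)$, and $W(\sigma_1)\cap W(\sigma_2)$ is the face poset of the complex $K_\pi$ on $[n]$ whose faces are those $S\subseteq[n]$ that, written in increasing order, form a subword of $\pi$ (equivalently, the $S$ on which $\pi^{-1}$ is increasing). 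Thus $X(\sigma_1)\cap X(\sigma_2)\cong\sd K_\pi\simeq K_\pi$, so $X(\sigma_1,\sigma_2)\simeq\Sigma K_\pi$.

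\emph{Part (a).} It now suffices to show that $K_\pi$ is homotopy equivalent to a wedge of spheres, allowing $0$-spheres (hence possibly disconnected) and allowing the empty wedge, i.e.\ a point: for then $\Sigma K_\pi$ is contractible when $K_\pi\simeq\mathrm{pt}$ and otherwise a wedge of spheres of dimension $\ge1$, using $\Sigma(Y\sqcup Z)\simeq\Sigma Y\vee\Sigma Z\vee S^1$ for nonempty connected $Y,Z$. I would prove this by showing $K_\pi$ is (non-pure) shellable and invoking the Bj\"{o}rner--Wachs theorem that a shellable complex is homotopy equivalent to a wedge of spheres. A natural shelling should arise by induction on $n$ via vertex decomposability at the largest letter $n$: the deletion of $n$ is $K_{\pi'}$, where $\pi'\in\ccs_{n-1}$ is obtained by erasing $n$ from $\pi$, and the link of $n$ is the restriction of $K_\pi$ to the letters preceding $n$ in $\pi$, again of the form $K_{\pi''}$. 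Verifying that this recursion satisfies the decomposition/shelling axioms — in particular that no facet of the link of $n$ is a facet of its deletion — is the step I expect to be the main obstacle; if it resists, the alternative is a direct lexicographic shelling of the facets of $K_\pi$ (the maximal increasing subwords of $\pi$), or a discrete Morse function on $K_\pi$ with an explicitly described set of critical cells.

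\emph{Part (b).} By part (a) the space $X(\sigma_1,\sigma_2)$ is contractible or a wedge of spheres of dimension $\ge1$, and such a wedge is determined up to homotopy by its reduced homology; so it suffices to produce $\sigma_1,\sigma_2$ with $\widetilde{H}_q\bigl(X(\sigma_1,\sigma_2)\bigr)\cong\Int^{\,\#\{l\,:\,k_l=q\}}$ for all $q$. By the suspension identification this amounts to realizing an arbitrary finitely supported sequence of ranks as $\bigl(\widetilde{H}_0(K_\pi),\widetilde{H}_1(K_\pi),\dots\bigr)$. The tool is the behaviour of $K_\pi$ under the direct sum $\oplus$ and skew sum $\ominus$ of permutations, obtained by a case analysis on which block the letters of a subword fall into:
$$K_{\rho\oplus\rho'}=K_\rho\ast K_{\rho'},\qquad K_{\rho\ominus\rho'}=K_\rho\sqcup K_{\rho'},\qquad K_{(1)}=\mathrm{pt},\qquad K_{(21)}=S^0,$$
whence in particular $K_{(21)^{\oplus r}}\simeq(S^0)^{\ast r}=S^{r-1}$ and $K_{\rho\oplus(21)}\simeq\Sigma K_\rho$. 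Taking $\ominus$ of blocks $(21)^{\oplus r}$ yields a disjoint union of spheres of prescribed dimensions, while each application of $\oplus(21)$ suspends, converting the rank of $\widetilde{H}_0$ (the number of components minus one) into a rank in $\widetilde{H}_1$ and shifting the remaining reduced homology up by one. A short induction on the target sequence, using these two moves, produces a permutation $\pi$ with the prescribed $\widetilde{H}_\ast(K_\pi)$; then $X(12\cdots n,\pi)\simeq\Sigma K_\pi$ has the prescribed homology, hence by part (a) the prescribed homotopy type. The only delicate point is to organise the induction so that exactly the required number of connected components is present at each stage — precisely what the $\oplus(21)$ step is there to control.
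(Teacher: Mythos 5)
Your reduction is exactly the paper's first step: $X(\sigma_1,\sigma_2)=X(\sigma_1)\cup X(\sigma_2)\simeq \Sigma\bigl(X(\sigma_1)\cap X(\sigma_2)\bigr)$, and the intersection is the barycentric subdivision of the complex $K_\pi$ of increasing subsequences of $\pi=\sigma_1^{-1}\sigma_2$ (the paper's $\Delta(Q(\sigma_1,\sigma_2))$, Lemma \ref{c:xsd}). The genuine gap is in part (a): you never actually prove that $K_\pi$ is homotopy equivalent to a wedge of spheres, and the one concrete mechanism you propose --- vertex decomposability with shedding vertex $n$ --- fails. Take $\pi=213$: the facets of $K_\pi$ are $\{1,3\}$ and $\{2,3\}$, so $\mathrm{del}_{K_\pi}(3)=\lk(K_\pi,3)=\bigl\{\emptyset,\{1\},\{2\}\bigr\}$. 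Every facet of the link is a facet of the deletion, which is precisely the negation of the shedding condition you single out; equivalently, the facets $\{1\},\{2\}$ of the deletion are not facets of $K_\pi$, so the deletion does not control the homotopy type in the way the recursion needs. (Here $K_{213}$ happens to be vertex decomposable at the vertex $1$, but your recursion is tied to the letter $n$ and its stated deletion/link identities, so it does not simply transfer.) The fallbacks you mention --- a lexicographic shelling of the maximal increasing subsequences, or a discrete Morse function --- are not carried out, and shellability of $K_\pi$ is a strictly stronger statement than what is needed. The paper avoids shellability altogether: writing the intersection as $Y(R,\phi)$ for a linear order $\prec$ and an injective $\phi$, it deletes the element $r_1$ of smallest $\phi$-value or the element $r_2$ of second smallest, according to whether $r_1\prec r_2$ or $r_2\prec r_1$, and shows (Lemma \ref{c:mv}) that this yields either a homotopy equivalence $Y(R,\phi)\simeq Y(R',\phi')$ or a wedge decomposition $Y(R,\phi)\simeq Y(R',\phi')\vee\Sigma Y(R'',\phi'')$; induction on $|R|$ then gives the wedge of spheres directly (and, as a bonus, the bound of Theorem \ref{thm:hom}(c)). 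Some such carefully chosen deletion argument is the missing idea in your part (a).

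The rest of your proposal is sound, conditional on (a). The identities $K_{\rho\oplus\rho'}=K_\rho * K_{\rho'}$ and $K_{\rho\ominus\rho'}=K_\rho\sqcup K_{\rho'}$ are correct, and your construction for part (b) (skew sums of blocks $(21)^{\oplus r}$, with $\oplus(21)$ acting as suspension) is a clean variant of the paper's: there, $Y(R_1,\phi_1)*Y(R_2,\phi_2)$ plays the role of your direct sum and an extra element $t$ with minimal $\phi$-value inserted between the blocks produces the extra wedge summand via the case-(ii) decomposition. One remark: since every space you build in (b) is an explicit iterated join, disjoint union and suspension of spheres, you could track its homotopy type directly and make part (b) independent of part (a), rather than passing through homology and invoking (a) to upgrade; as written, (b) inherits the gap in (a).
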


Our third result concerns the homotopy type of the complex of injective words associated to an arbitrary simplicial complex.
\begin{definition}
\label{d:ciw}
Let $K$ be a simplicial complex with vertex set $V$. The set of injective words associated to $K$ is
$$M(K)= \bigcup_{\emptyset \neq \sigma \in K} \inj(\sigma) \subset \inj(V).$$
The \emph{complex of injective words} on $K$ is $\Gamma(K)=\Delta(M(K))$.
\end{definition}
\noindent
Let $\lk(K,\sigma)=\{\tau \in K: \tau\cap\sigma=\emptyset,\;\text{and}\; \tau\cup\sigma\in K\}$
denote the link of a simplex $\sigma \in K$.
\begin{theorem}
\label{t:hty}
Let $K$ be a finite connected simplicial complex. Then there is a homotopy equivalence
\begin{equation}
\label{e:hty}
\Gamma(K) \simeq \bigvee_{\sigma\in K}\bigvee_{D(|\sigma|)}\Sigma^{|\sigma|}\lk(K,\sigma).
\end{equation}
\end{theorem}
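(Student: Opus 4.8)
The plan is to exhibit $\Gamma(K)$ as a homotopy colimit over the face poset of $K$ and then apply the Wedge Lemma. Let $P=P(K)$ be the poset of nonempty faces of $K$, so $\Delta(P)=\sd K$, and consider the order-preserving surjection $\pi\colon M(K)\to P$ sending an injective word $w$ to its underlying set $\underline w$. For $\sigma\in P$ one has $\pi^{-1}(P_{\le\sigma})=\{w:\underline w\subseteq\sigma\}=\inj(\sigma)$, so $\sigma\mapsto\Delta(\inj(\sigma))$ is a covariant functor $\mathcal D\colon P\to\Top$ whose transition maps are the subcomplex inclusions $\Delta(\inj(\sigma))\hookrightarrow\Delta(\inj(\tau))$ for $\sigma\subseteq\tau$. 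The homotopy-colimit refinement of Quillen's Theorem~A (the Projection Lemma) then gives
\[
\Gamma(K)=\Delta(M(K))\;\simeq\;\hocolim_{\sigma\in P}\mathcal D(\sigma).
\]

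Next I would describe the diagram $\mathcal D$. By Theorem~\ref{t:bw} applied to the alphabet $\sigma$, each value $\mathcal D(\sigma)=\Delta(\inj(\sigma))$ is homotopy equivalent to $\bigvee_{D(|\sigma|)}S^{|\sigma|-1}$ (a point when $|\sigma|=1$), and in particular is $(|\sigma|-2)$-connected. The essential point is that every transition map $\Delta(\inj(\sigma))\hookrightarrow\Delta(\inj(\tau))$ with $\sigma\subsetneq\tau$ is nullhomotopic, indeed factors through a contractible complex: choosing $v\in\tau\setminus\sigma$, let $Q(\sigma,\tau)\subseteq\inj(\tau)$ be the set of words that either do not contain $v$ or have $v$ as their last letter; the map $c$ sending $w$ to $w$ when $v$ is the last letter of $w$ and to $wv$ otherwise is an order-preserving closure ($w\le c(w)$, $c^2=c$) whose image has the one-letter word $v$ as least element, so $\Delta(Q(\sigma,\tau))$ is a contractible cone, and since $\inj(\sigma)\subseteq Q(\sigma,\tau)$ the inclusion $\Delta(\inj(\sigma))\hookrightarrow\Delta(\inj(\tau))$ factors through $\Delta(Q(\sigma,\tau))$. (A bare dimension count already gives nullhomotopy, but the factorization is what is needed below.)

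With all transition maps nullhomotopic, the Wedge Lemma for homotopy colimits yields
\[
\hocolim_{\sigma\in P}\mathcal D(\sigma)\;\simeq\;\Delta(P)\ \vee\ \bigvee_{\sigma\in P}\bigl(\Delta(P_{>\sigma})*\mathcal D(\sigma)\bigr),
\]
with the convention $\Delta(\emptyset)*Y=Y$. It remains to identify the summands. The term $\Delta(P)=\sd K$ is homeomorphic to $K=\lk(K,\emptyset)$, which is the $\sigma=\emptyset$ term of \eqref{e:hty} since $D(0)=1$. For $\emptyset\ne\sigma\in K$, the order-isomorphism $\rho\mapsto\rho\setminus\sigma$ identifies $P_{>\sigma}$ with the face poset of $\lk(K,\sigma)$, so $\Delta(P_{>\sigma})$ is the barycentric subdivision of $\lk(K,\sigma)$; using Theorem~\ref{t:bw}, distributivity of the join over wedges, and $S^{k-1}*Y=\Sigma^kY$,
\[
\Delta(P_{>\sigma})*\mathcal D(\sigma)\;\simeq\;\Bigl(\textstyle\bigvee_{D(|\sigma|)}S^{|\sigma|-1}\Bigr)*\lk(K,\sigma)\;\simeq\;\bigvee_{D(|\sigma|)}\Sigma^{|\sigma|}\lk(K,\sigma),
\]
and for $|\sigma|=1$ both sides are contractible, consistently with $D(1)=0$. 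Wedging these contributions over all $\sigma$ produces exactly the right-hand side of \eqref{e:hty}, and connectedness of $K$ guarantees $\Gamma(K)$ is connected so that the wedge is unambiguous.

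The main obstacle is the Wedge Lemma step: one must fix a version whose hypotheses match our diagram and verify them — objectwise cofibrancy of $\mathcal D$ (clear, since the transition maps are inclusions of subcomplexes) and sufficient coherence of the nullhomotopies, which is precisely what the factorizations of the transition maps through the contractible complexes $\Delta(Q(\sigma,\tau))$ provide. Alternatively, one can bypass the Wedge Lemma by inducting on $\dim K$: peeling off the top-dimensional faces $\rho_1,\dots,\rho_N$ of $K$ exhibits $\Gamma(K)$ as the homotopy pushout of $\Gamma(K^{(d-1)})\leftarrow\bigsqcup_i\Gamma(\partial\rho_i)\to\bigsqcup_i\Delta(\inj(\rho_i))$ whose right leg is nullhomotopic by connectivity, so that the pushout splits off the wedge $\bigvee_i\Delta(\inj(\rho_i))$ supplying the top-dimensional terms of \eqref{e:hty}; comparing the remaining summand with \eqref{e:hty} via the inductive hypotheses for $K^{(d-1)}$ and for the sphere complexes $\partial\rho_i$ — and tracking how the links change on passing from $K^{(d-1)}$ to $K$ — finishes the argument, at the cost of more bookkeeping.
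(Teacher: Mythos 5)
Your proposal is correct and is essentially the paper's own argument: the paper applies the Bj\"orner--Wachs--Welker poset fiber theorem to the very same map $w\mapsto\underline w$ from $M(K)$ to the face poset, identifies the fibers over $P_{\le\sigma}$ with $\inj(\sigma)\simeq\bigvee_{D(|\sigma|)}S^{|\sigma|-1}$ via Theorem~\ref{t:bw}, checks the required $(|\sigma|-2)$-connectedness, and identifies $\Delta(P_{>\sigma})$ with $\sd\lk(K,\sigma)$ exactly as you do. Your rederivation of the fiber theorem via the Projection Lemma and Wedge Lemma (including the explicit contractible factorizations of the transition maps) is just an unpacking of the cited BWW result, not a genuinely different route.
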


Finally, we discuss some simple probabilistic aspects of permutation complexes.
\begin{definition}
\label{Def:xnm}
For positive integers $m$ and $n$,
let $\mathcal{X}_{m,n}$ denote the probability space of permutation complexes $X(\sigma_1,\ldots,\sigma_m)$, where $\sigma_1,\ldots,\sigma_m$ are drawn independently from the uniform probability space $\ccs_n$. Write $X_{m,n}$ for a random complex in $\mathcal{X}_{m,n}$.
\end{definition}

\begin{proposition}
\label{p:echar}
Let $m,n \geq 1$. Then
\begin{itemize}
\item[(a)]
The expectation of the reduced Euler characteristic of complexes in $\mathcal{X}_{m,n}$ satisfies
\begin{equation}
\label{e:echar}
E[\tilde{\chi}(X_{m,n})]=
\sum_{k=0}^n (-1)^{k-1} \left(1-\left(1-\frac{1}{k!}\right)^m\right)\binom{n}{k}k!.
\end{equation}
\item[(b)]
\begin{equation}
\label{e:prb2}
|E[\tilde{\chi}(X_{2,n})]|=O(n^{-1/4}).
\end{equation}
\end{itemize}
\end{proposition}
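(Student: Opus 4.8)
\emph{Part (a).} The idea is to compute $\tilde\chi$ through Farmer's chain complex and then take expectations term by term. By Theorem~\ref{t:acom}, the homology of $X(\sigma_1,\dots,\sigma_m)$ is the homology of the finite chain complex $\bigoplus_{p\ge 0}A_p(\sigma_1,\dots,\sigma_m)$, where $\rk A_p$ is the number $N_{p+1}$ of injective words of length $p+1$ on $[n]$ lying below some $\sigma_i$. Hence, by the Euler--Poincar\'e relation,
\[
\tilde\chi\bigl(X(\sigma_1,\dots,\sigma_m)\bigr)=-1+\sum_{k=1}^{n}(-1)^{k-1}N_k .
\]
There are $\binom nk k!$ injective words $w$ of length $k$ on $[n]$, and for a fixed such $w$ the restriction of a uniformly random $\sigma\in\ccs_n$ to the $k$ positions occupied by the letters of $w$ is a uniformly random linear order on those letters, so $\pr[w\le\sigma]=1/k!$; by independence of $\sigma_1,\dots,\sigma_m$ we get $\pr[w\le\sigma_i\ \text{for some}\ i]=1-(1-1/k!)^m$. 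Linearity of expectation then gives $E[N_k]=\binom nk k!\bigl(1-(1-1/k!)^m\bigr)$, and plugging this in --- and noticing that the $k=0$ term of the resulting sum equals $-1$, which absorbs the additive constant --- yields \eqref{e:echar}.

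\emph{Part (b).} First specialize \eqref{e:echar} to $m=2$: since $1-(1-1/k!)^2=2/k!-1/(k!)^2$, the general term becomes $(-1)^{k-1}\binom nk(2-1/k!)$, and because $\sum_{k=0}^n(-1)^k\binom nk=0$ for $n\ge 1$ the part coming from the constant $2$ cancels, leaving
\[
E[\tilde\chi(X_{2,n})]=\sum_{k=0}^n(-1)^k\binom nk\frac1{k!}=L_n(1),
\]
the value at $1$ of the $n$-th Laguerre polynomial. Equivalently, inserting $\tfrac1{k!}=\tfrac1{2\pi i}\oint_{|z|=\sqrt n}e^z z^{-k-1}\,dz$ and summing the binomial series produces the contour representation $E[\tilde\chi(X_{2,n})]=\tfrac1{2\pi i}\oint_{|z|=\sqrt n}\tfrac{e^z(z-1)^n}{z^{n+1}}\,dz=\tfrac1{2\pi}\int_{-\pi}^{\pi}\exp\bigl(\psi(\sqrt n\,e^{i\theta})\bigr)\,d\theta$ with phase $\psi(z)=z+n\log(1-1/z)$.

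The plan is to estimate this integral by the method of steepest descent. Since $\psi'(z)=1+n/(z^2-z)$, the saddle points are the conjugate roots $z_\pm=\tfrac12\pm i\sqrt{n-\tfrac14}$ of $z^2-z+n=0$, and these lie precisely on the circle $|z|=\sqrt n$ (at arguments close to $\pm\pi/2$), which is why $\sqrt n$ is the correct radius. A short computation shows $\operatorname{Re}\psi\le\tfrac12$ everywhere on this circle, with near-equality only at $z_\pm$, while $\psi''(z_\pm)=-(2z_\pm-1)/n$ has modulus of order $n^{-1/2}$; transported to the angular variable, the quadratic part of $\psi$ at each saddle acquires an imaginary coefficient of order $\sqrt n$, so the local contribution of each saddle has size $\asymp e^{1/2}(\sqrt n)^{-1/2}=O(n^{-1/4})$. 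Summing the two conjugate saddle contributions and disposing of the remaining arc by integration by parts (using $|\psi'|\gtrsim\sqrt n$ away from the saddles) gives $|E[\tilde\chi(X_{2,n})]|=O(n^{-1/4})$. A quicker route is to quote the classical Fej\'er/Plancherel--Rotach asymptotics $L_n(x)=\pi^{-1/2}e^{x/2}(nx)^{-1/4}\cos(2\sqrt{nx}-\pi/4)+O(n^{-3/4})$ for fixed $x>0$, which gives $L_n(1)=O(n^{-1/4})$ at once. Part~(a) is pure bookkeeping once Theorem~\ref{t:acom} is available; the only genuine difficulty is making the steepest-descent estimate rigorous --- controlling the quadratic approximation near the two saddles and showing the rest of the contour contributes $o(n^{-1/4})$ even though $\operatorname{Re}\psi$ there stays only a fixed amount below its maximum, so the decay must be extracted from oscillation rather than size --- or, failing that, citing the uniform Laguerre bound.
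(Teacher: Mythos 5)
Your proof is correct and follows essentially the same route as the paper: part (a) via Farmer's chain complex, the $1/k!$ subword probability, and linearity of expectation; part (b) by reducing $E[\tilde\chi(X_{2,n})]$ to $L_n(1)$ and invoking the classical Laguerre asymptotics (the paper cites Theorem 7.6.4 of Szeg\H{o} for exactly this). Your sketched steepest-descent alternative is just a derivation of that same Fej\'er-type formula and adds nothing beyond the citation the paper already uses.
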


The paper is organized as follows. In Section \ref{Sec:uniform} we prove Theorem \ref{t:uniform}
 on the realizability of stable homotopy types by permutation complexes. In Section \ref{Sec:two} we establish Theorem \ref{t:twoper} on the homotopy type of complexes generated by two permutations. Section \ref{s:inj} is concerned with complexes of injective words and includes
a proof of Theorem \ref{t:hty} and a simple application to a result of Randal-Williams and Wahl \cite{RW-W17}. In Section \ref{s:random} we prove Proposition \ref{p:echar} and discuss some additional probabilistic questions concerning permutation complexes. We conclude in Section \ref{s:con} with a number of remarks and open problems.

\section{Realizability of Stable Homotopy Types}
\label{Sec:uniform}
In this section we show that the stable homotopy type of a simplicial complex can be realized
by a permutation complex. We need some preliminary notions.

\begin{definition}\label{Def:Q}
For $\sigma_1,\ldots,\sigma_m \in \ccs_n$, let $Q(\sigma_1,\ldots,\sigma_m)$ denote the poset on $[n]$ with the order $\prec$ given by
$i \prec j$ if and only if $\sigma_k^{-1}(i)<\sigma_k^{-1}(j)$ for all $1 \leq k \leq m$.
\end{definition}
\noindent
We first note the following
\begin{lemma}
\label{Lem:Intersection=sd(Q)}
\label{c:xsd}
For any permutations $\sigma_1,\ldots, \sigma_m \in \ccs_n$,
\begin{equation}
\label{e:xsd}
\bigcap_{k=1}^m X(\sigma_k) \cong \sd \Delta(Q(\sigma_1,\ldots,\sigma_m)).
\end{equation}
\end{lemma}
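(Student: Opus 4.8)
The plan is to identify both sides of \eqref{e:xsd}, viewed as subcomplexes of $\Delta(\inj(V))$, with the order complex of one and the same poset: the poset of nonempty chains of $Q\defeq Q(\sigma_1,\ldots,\sigma_m)$. Write $W_k\defeq W(\sigma_k)=\{w\in\inj(V):w\le\sigma_k\}$, so that $X(\sigma_k)=\Delta(W_k)$, and put $W_\cap\defeq\bigcap_{k=1}^m W_k$. Since each $\Delta(W_k)$ carries the order induced from $\inj(V)$, a subset $\{w_0,\ldots,w_p\}$ is a simplex of $\bigcap_k X(\sigma_k)$ exactly when it is a chain of $\inj(V)$ with every $w_i\in W_\cap$, i.e.\ a chain of $W_\cap$; hence $\bigcap_k X(\sigma_k)=\Delta(W_\cap)$. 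On the other hand, by definition $\sd\Delta(Q)=\Delta(P(\Delta(Q)))$, and $P(\Delta(Q))$ is precisely the poset of nonempty chains of $Q$ ordered by inclusion. So it suffices to produce a poset isomorphism $W_\cap\cong P(\Delta(Q))$ and then apply the order-complex functor $\Delta(-)$.

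The isomorphism will be the support map $w=v_1\cdots v_t\mapsto\supp(w)=\{v_1,\ldots,v_t\}$, and its entire behaviour is controlled by one elementary observation: $w\le\sigma_k$ is equivalent to $\sigma_k^{-1}(v_1)<\cdots<\sigma_k^{-1}(v_t)$. Consequently, if $w\in W_\cap$ then $\sigma_k^{-1}(v_i)<\sigma_k^{-1}(v_j)$ for all $k$ whenever $i<j$, which by Definition \ref{Def:Q} means $v_i\prec v_j$ in $Q$; thus $\supp(w)$ is a chain of $Q$, and $w$ is exactly the word listing $\supp(w)$ in increasing $\prec$-order. This shows at once that the support map lands in $P(\Delta(Q))$ and is injective (a word in $W_\cap$ is recovered from its support by $\prec$-ordering). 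For surjectivity, a nonempty chain $C=\{c_1\prec\cdots\prec c_t\}$ of $Q$ gives, by Definition \ref{Def:Q}, $\sigma_k^{-1}(c_1)<\cdots<\sigma_k^{-1}(c_t)$ for every $k$, so the word $c_1\cdots c_t$ lies in $W_\cap$ with support $C$. Finally, since every element of $W_\cap$ lists its support in the common $\prec$-order, for $w,w'\in W_\cap$ the relation $w\le w'$ (being a subword) holds if and only if $\supp(w)\subseteq\supp(w')$; hence both the support map and its inverse preserve the order.

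Applying $\Delta(-)$ to the poset isomorphism $W_\cap\cong P(\Delta(Q))$ then yields $\bigcap_k X(\sigma_k)=\Delta(W_\cap)\cong\Delta(P(\Delta(Q)))=\sd\Delta(Q)$, as claimed. There is no genuinely hard step here; the only point requiring care is keeping the two orderings straight — injective words ordered by ``is a subword of'' versus subsets ordered by inclusion — while the observation doing the real work is that membership in the intersection $W_\cap$ forces a word to list its letters in $Q$-order, which is exactly what turns the (in general non-injective) support map into a bijection onto the chains of $Q$.
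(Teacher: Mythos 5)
Your proof is correct and follows essentially the same route as the paper: the isomorphism is the same support map $w=j_0\cdots j_p\mapsto\{j_0,\ldots,j_p\}$, driven by the same key observation that $w\le\sigma_k$ for all $k$ is equivalent to the letters of $w$ being listed in increasing order for the poset $Q(\sigma_1,\ldots,\sigma_m)$. You merely spell out (via the poset isomorphism $W_\cap\cong P(\Delta(Q))$) the bijectivity and order-compatibility checks that the paper leaves as ``straightforward.''
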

\begin{proof} Let $w=j_0\cdots j_p \in \inj([n])$. Then $w$ is a vertex of
$\bigcap_{k=1}^m X(\sigma_k)$ if and only if $\sigma_k^{-1}(j_0)<\cdots<\sigma_k^{-1}(j_p)$
for all $1 \leq k \leq m$, and that holds if and only if $\{j_0,\ldots,j_p\}$ is a simplex in $\Delta(Q(\sigma_1,\ldots,\sigma_m))$, i.e. a vertex of
$\sd \Delta(Q(\sigma_1,\ldots,\sigma_m))$. Defining a map $\varphi$ on the vertex set of
$\bigcap_{k=1}^m X(\sigma_k)$ by $\varphi(j_0\cdots j_p)= \{j_0,\ldots,j_p\}$, it is straightforward to check that $\varphi$ is a simplicial isomorphism between $\bigcap_{k=1}^m X(\sigma_k)$
and $\sd \Delta(Q(\sigma_1,\ldots,\sigma_m))$.
\end{proof}
\noindent
The {\it concatenation} of two permutations $\tau \in \ccs_n, \eta \in \ccs_{\ell}$, is the permutation $\tau \odot \eta \in \ccs_{n+{\ell}}$ given by
$$
\tau \odot \eta (j)=\left\{
\begin{array}{ll}
\tau(j) & 1 \leq j \leq n, \\
n+\eta(j-n) &  n+1 \leq j \leq n+\ell.
\end{array}
\right.~~
$$
Let $\tau_1,\ldots,\tau_k \in \ccs_n$ and $\eta_1,\ldots,\eta_k \in \ccs_{\ell}$. Let $A*B$ denote the simplicial join of complexes $A$ and $B$. Clearly
\begin{equation}
\label{e:jointe}
\Delta(Q(\tau_1 \odot \eta_1,\ldots,\tau_m \odot \eta_m))\cong
\Delta(Q(\tau_1,\ldots,\tau_m))*
\Delta(Q(\eta_1,\ldots,\eta_m)).
\end{equation}
We will also need the following
\begin{lemma}
\label{c:union}
Let $X_1,\ldots,X_d$ be simplicial complexes such that
$\bigcap_{i \in I} X_i$ is contractible for all $\emptyset \neq I \subsetneqq [d]$.
Then
\begin{equation}
\label{e:union}
\bigcup_{i=1}^d X_i \simeq \Sigma^{d-1}\left(\bigcap_{i=1}^d X_i\right).
\end{equation}
\end{lemma}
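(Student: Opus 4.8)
The plan is to argue by induction on $d$, with the engine being the following elementary gluing fact (which is exactly the case $d=2$): if $A$ and $B$ are subcomplexes of a common simplicial complex, both contractible, then $A\cup B\simeq\Sigma(A\cap B)$. I would prove this by observing that the inclusion $A\cap B\hookrightarrow A$ of subcomplexes is a cofibration, so the canonical square exhibiting $A\cup B$ as $A\cup_{A\cap B}B$ is a homotopy pushout; since the homotopy type of a homotopy pushout depends only on the homotopy types of the three spaces and the two maps, collapsing the contractible spaces $A$ and $B$ to a point identifies $A\cup B$ with the double mapping cylinder of $\ast\leftarrow A\cap B\to\ast$, which is the unreduced suspension $\Sigma(A\cap B)=S^0*(A\cap B)$. (The degenerate case $A\cap B=\emptyset$ reads $A\cup B\simeq S^0=\Sigma\emptyset$, consistently.)

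For the inductive step, suppose the statement holds for every family of fewer than $d$ complexes, let $X_1,\dots,X_d$ satisfy the hypothesis, and set $Y=\bigcup_{i=1}^{d-1}X_i$ and $Z_i=X_i\cap X_d$ for $1\le i\le d-1$. First I would check that both $\{X_1,\dots,X_{d-1}\}$ and $\{Z_1,\dots,Z_{d-1}\}$ satisfy the hypothesis for $d-1$ complexes: for $\emptyset\ne I\subsetneq[d-1]$ the complex $\bigcap_{i\in I}X_i$ is contractible since $I$ is a nonempty proper subset of $[d]$, and $\bigcap_{i\in I}Z_i=\bigcap_{i\in I\cup\{d\}}X_i$ is contractible since $I\cup\{d\}$ is also a nonempty proper subset of $[d]$. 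Applying the inductive hypothesis to $\{X_1,\dots,X_{d-1}\}$ gives $Y\simeq\Sigma^{d-2}\bigl(\bigcap_{i=1}^{d-1}X_i\bigr)$, and since $\bigcap_{i=1}^{d-1}X_i$ is contractible (now $[d-1]$ is a nonempty proper subset of $[d]$), $Y$ is contractible. Applying it to $\{Z_1,\dots,Z_{d-1}\}$, and using $\bigcup_{i=1}^{d-1}Z_i=Y\cap X_d$ and $\bigcap_{i=1}^{d-1}Z_i=\bigcap_{i=1}^{d}X_i$, gives
\[
Y\cap X_d\simeq\Sigma^{d-2}\Bigl(\bigcap_{i=1}^{d}X_i\Bigr).
\]

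Finally, $X_d$ is contractible by hypothesis and $Y$ is contractible by the previous paragraph, so the gluing fact applied to the pair $Y$, $X_d$ inside $\bigcup_{i=1}^{d}X_i=Y\cup X_d$ yields
\[
\bigcup_{i=1}^{d}X_i\simeq\Sigma(Y\cap X_d)\simeq\Sigma\,\Sigma^{d-2}\Bigl(\bigcap_{i=1}^{d}X_i\Bigr)=\Sigma^{d-1}\Bigl(\bigcap_{i=1}^{d}X_i\Bigr),
\]
completing the induction; the base case $d=1$ is trivial (the conclusion is $X_1\simeq X_1$) and $d=2$ is the gluing fact itself.

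I expect the only genuine obstacle to be the careful justification of the gluing fact: that an inclusion of subcomplexes is a cofibration, that the ordinary pushout along a cofibration computes the homotopy pushout, and that the homotopy pushout of two points over $A\cap B$ is $\Sigma(A\cap B)$ — together with the minor bookkeeping of the empty-intersection case. Once that is in place, the argument is a clean two-fold application of the inductive hypothesis followed by a single suspension.
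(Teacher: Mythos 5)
Your proof is correct and follows essentially the same route as the paper: induction on $d$, with the $d=2$ gluing fact applied to the pair $\bigcup_{i=1}^{d-1}X_i$ and $X_d$, and the inductive hypothesis applied to the family $\{X_i\cap X_d\}_{i=1}^{d-1}$ to identify $\bigl(\bigcup_{i=1}^{d-1}X_i\bigr)\cap X_d$ with $\Sigma^{d-2}\bigl(\bigcap_{i=1}^{d}X_i\bigr)$. The only minor difference is that you deduce contractibility of $\bigcup_{i=1}^{d-1}X_i$ by a second application of the inductive hypothesis (a suspension of a contractible space is contractible), whereas the paper invokes the nerve lemma; both work, and yours is marginally more self-contained.
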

\begin{proof}
We argue by induction on $d$. The induction basis $d=2$ is straightforward,
see e.g. Corollary 7.4.3 in \cite{Brown}. Assume now that $d>2$. Applying the induction hypothesis to the family
$\{X_i \bigcap X_d\}_{i=1}^{d-1}$, it follows that
\begin{equation}
\label{e:unid}
\left(\bigcup_{i=1}^{d-1} X_i\right) \bigcap X_d = \bigcup_{i=1}^{d-1} (X_i \bigcap X_d)
\simeq \Sigma^{d-2} \left(\bigcap_{i=1}^{d-1} (X_i \bigcap X_d)\right)=
\Sigma^{d-2} \left(\bigcap_{i=1}^{d} X_i \right).
\end{equation}
\noindent
Next note that the intersection $\bigcap_{i \in I} X_i$ is contractible for any subset
$\emptyset \neq I \subset [d-1]$, hence $\bigcup_{i=1}^{d-1} X_i$ is contractible by the nerve lemma.
Applying the induction basis to
the pair $\bigcup_{i=1}^{d-1} X_i$ and $X_d$, and using (\ref{e:unid}) we obtain
\begin{equation}
\label{e:unidd}
\begin{split}
&\bigcup_{i=1}^{d} X_i = \left(\bigcup_{i=1}^{d-1} X_i\right) \cup X_d
\simeq \Sigma \left( \left(\bigcup_{i=1}^{d-1} X_i\right) \bigcap X_d \right) \\
&\simeq \Sigma \left(\Sigma^{d-2} \left(\bigcap_{i=1}^{d} X_i \right) \right)
\simeq \Sigma^{d-1} \left(\bigcap_{i=1}^{d} X_i \right).
\end{split}
\end{equation}
\end{proof}

\begin{definition}
\label{Def:order_dimension}
The \emph{order dimension} $\dim(P)$ of a finite partially ordered set $(P,\prec)$ is the minimal number $k$ of linear orders $\prec_1,\ldots,\prec_k$ on the elements of $P$, such that for any $x,y \in P$, it holds that $x\prec y$ if and only if $x \prec_i y$ for all $1\le i\le k$.
\end{definition}
\noindent
Dushnik and Miller \cite{DM41} observed that $\dim(P) \leq |P|$. Their bound was improved by
Hiraguchi \cite{H51} to $\dim(P) \leq \lfloor |P|/2 \rfloor$ for posets of size $|P| \geq 4$.
\noindent
The main result of this section is the following detailed version of Theorem \ref{t:uniform}.
\begin{theorem}
\label{Thm:uniform}
Let $Y$ be a simplicial complex and let $|P(Y)|=n$. Then there exist $d=\dim(P(Y))$ permutations
$\sigma_1,\ldots,\sigma_d \in \ccs_{n+2d}$ such that
\begin{equation}
\label{e:uni}
X(\sigma_1,\ldots,\sigma_d)\simeq \Sigma^{2d-1} Y.
\end{equation}
\end{theorem}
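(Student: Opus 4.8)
The plan is to realize $X(\sigma_1,\ldots,\sigma_d)$ as a union of the (contractible) complexes $X(\sigma_k)$, arrange the $\sigma_k$ so that every \emph{proper} sub-intersection of the $X(\sigma_k)$ is contractible while the full intersection is homeomorphic to $\Sigma^{d}Y$, and then read off the homotopy type from Lemma~\ref{c:union}. The starting point, valid for arbitrary permutations, is that $X(\sigma_1,\ldots,\sigma_d)=\bigcup_{k=1}^{d}X(\sigma_k)$: one has $W(\sigma_1,\ldots,\sigma_d)=\bigcup_k W(\sigma_k)$, and if $w_0<\cdots<w_p$ is a chain in this union then its top element $w_p$ satisfies $w_p\le\sigma_k$ for some $k$, whence $w_i\le w_p\le\sigma_k$ for every $i$, so the chain is already a simplex of $X(\sigma_k)$.

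Next I would choose the permutations. Since $d=\dim(P(Y))$, after fixing a bijection $P(Y)\leftrightarrow[n]$ there are $d$ linear extensions of $P(Y)$ realizing it; reading each as a permutation produces $\pi_1,\ldots,\pi_d\in\ccs_n$ with $Q(\pi_1,\ldots,\pi_d)=P(Y)$, so that $\Delta(Q(\pi_1,\ldots,\pi_d))=\sd Y$. For the $2d$ extra letters, split $[2d]$ into the $d$ consecutive pairs $\{2j-1,2j\}$ and let $\alpha_k\in\ccs_{2d}$ be the transposition interchanging $2k-1$ and $2k$, so $\alpha_k$ reverses the $k$-th pair and fixes the rest. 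Put $\sigma_k=\alpha_k\odot\pi_k\in\ccs_{n+2d}$ for $k=1,\ldots,d$. (We may assume $d\ge 2$: when $d\le 1$ the poset $P(Y)$ is a chain, which forces $Y$ to be a single point, and then both $\Sigma^{2d-1}Y$ and $X(\sigma_1)=\sd\Delta_{n+1}$ are contractible.)

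The heart of the proof is the analysis of $\bigcap_{k\in I}X(\sigma_k)$ for $\emptyset\ne I\subseteq[d]$. By Lemma~\ref{Lem:Intersection=sd(Q)} this complex equals $\sd\Delta\!\bigl(Q((\sigma_k)_{k\in I})\bigr)$, and since concatenation turns $Q$ into an ordinal sum, iterating (\ref{e:jointe}) identifies it with the barycentric subdivision of $\Delta\!\bigl(Q((\alpha_k)_{k\in I})\bigr)*\Delta\!\bigl(Q((\pi_k)_{k\in I})\bigr)$. Each $\alpha_k$ moves only its own pair, so in $Q((\alpha_k)_{k\in I})$ elements of distinct pairs remain comparable; the pair $\{2j-1,2j\}$ is a two-element antichain when $j\in I$ (reversed by $\alpha_j$ and fixed by every other $\alpha_k$ with $k\in I$) and a two-element chain when $j\notin I$. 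Hence $\Delta\!\bigl(Q((\alpha_k)_{k\in I})\bigr)$ is the join of $|I|$ copies of $S^{0}$ with $d-|I|$ copies of the $1$-simplex. When $I=[d]$ there are no $1$-simplex factors, so this is $(S^{0})^{*d}=S^{d-1}$ and
\[
\bigcap_{k=1}^{d}X(\sigma_k)\ \cong\ \sd\!\bigl(S^{d-1}*\sd Y\bigr)\ \simeq\ S^{d-1}*Y\ =\ \Sigma^{d}Y ,
\]
whereas for $I\subsetneq[d]$ there is at least one $1$-simplex join factor, so $\bigcap_{k\in I}X(\sigma_k)$ is the subdivision of a cone and hence contractible (for $|I|=1$ this also follows at once from $X(\sigma_k)=\sd\Delta_{n+2d-1}$).

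Finally I would invoke Lemma~\ref{c:union} for the family $X(\sigma_1),\ldots,X(\sigma_d)$, whose proper sub-intersections are contractible by the previous paragraph; together with the union identity this yields
\[
X(\sigma_1,\ldots,\sigma_d)=\bigcup_{k=1}^{d}X(\sigma_k)\ \simeq\ \Sigma^{d-1}\!\Bigl(\bigcap_{k=1}^{d}X(\sigma_k)\Bigr)\ \simeq\ \Sigma^{d-1}\Sigma^{d}Y\ =\ \Sigma^{2d-1}Y .
\]
The only genuinely non-routine step is the design in the second paragraph: one must place the auxiliary letters so that all $2^{d}-2$ proper sub-intersections collapse simultaneously while the total intersection still carries the full $d$-fold suspension of $Y$. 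The choice of ``one transposed coordinate pair per permutation'', indexed by $[d]$, does exactly this, because a pair survives as a chain in $Q((\alpha_k)_{k\in I})$ precisely when its index lies outside $I$, i.e. precisely when $I$ is proper; checking this dichotomy, along with the reduction of $X(\sigma_1,\ldots,\sigma_d)$ to a union of subdivided simplices, is where the real work lies.
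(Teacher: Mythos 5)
Your proof is correct and follows essentially the same route as the paper: concatenate linear extensions realizing $\dim(P(Y))$ with disjoint transposed pairs, compute all sub-intersections via Lemma~\ref{Lem:Intersection=sd(Q)} and (\ref{e:jointe}), and finish with Lemma~\ref{c:union}. The only (harmless) deviations are that you place the transposition block on the other side of the concatenation and add explicit checks (the union identity, the $d\le 1$ case) that the paper leaves implicit.
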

\begin{proof}
Let $P(Y)=\{F_1,\ldots,F_n\}$.
Choose $d$ permutations $\tau_1,\ldots,\tau_d \in S_n$ such that $F_i \subset F_j$ if and only if $\tau_k^{-1}(i) \leq \tau_k^{-1}(j)$ for all $1 \leq k \leq d$. Then
\begin{equation}
\label{e:sdyd}
\sd Y \cong \Delta\left(Q(\tau_1,\ldots,\tau_d)\right).
\end{equation}
\noindent
For $1 \leq k \leq d$ let $\eta_k \in S_{2d}$ denote
the transposition that switches $2k-1$ and $2k$, and let $\sigma_k=\tau_k \odot \eta_k$.
Observe that if $\emptyset \neq I \subsetneqq [d]$ and if
$j\in [d] \setminus I$, then any element in the poset $Q\left(\eta_i: i \in I\right)$ is comparable to the element $2j-1$ (and also to $2j$). Hence
\begin{equation}
\label{e:contr}
\Delta\big(Q\left(\eta_i: i \in I\right)\big) \simeq \emph{point}.
\end{equation}
On the other hand
\begin{equation}
\label{e:sphere}
\Delta\big(Q\left(\eta_1,\ldots,\eta_d\right)\big)=
\{1,2\}*\{3,4\}* \cdots * \{2d-1,2d\}\cong
 S^{d-1}.
\end{equation}
\noindent
Using (\ref{e:xsd}), (\ref{e:jointe}), (\ref{e:sdyd}), (\ref{e:contr}) and (\ref{e:sphere}), it follows that for any $\emptyset \neq I \subset \{1,\ldots,d\}$
\begin{equation}
\label{e:inter}
\begin{split}
\bigcap_{i \in I} X(\sigma_i) &\cong \sd \Delta\big(Q(\sigma_i:i \in I)\big) \simeq \Delta\big(Q(\tau_i \odot \eta_i:i \in I)\big) \\
&\simeq \Delta\big(Q(\tau_i:i \in I)\big)*\Delta\big(Q(\eta_i:i \in I)\big) \\
&\simeq
\begin{cases}
\emph{point} & I \neq \{1,\ldots,d\}, \\
Y*S^{d-1} & I=\{1,\ldots,d\},
\end{cases}
\\
&\simeq
\begin{cases}
\emph{point} & I \neq \{1,\ldots,d\}, \\
\Sigma^{d} Y & I=\{1,\ldots,d\}.
\end{cases}
\end{split}
\end{equation}
Applying Lemma \ref{c:union} to the family $\{X(\sigma_i)\}_{i=1}^d$, we thus obtain
\begin{equation*}
\label{e:htype}
\begin{split}
X(\sigma_1,\ldots,\sigma_d)&= \bigcup_{k=1}^d X(\sigma_k)
\simeq \Sigma^{d-1}\bigcap_{k=1}^d X(\sigma_k)  \simeq \Sigma^{2d-1}Y.
\end{split}
\end{equation*}
\end{proof}

\begin{example}\label{Ex:RP2}
 Let $Y$ be the $6$ point triangulation of $\Rea \PPR^2$. The number of nonempty faces of $Y$ is $n=31$. As $d \leq \lfloor n/2 \rfloor =15$, it follows that $\Sigma^{29} \Rea \PPR^2$ is homotopy equivalent to a permutation complex.
\end{example}

Recently Dejan Govc discovered an example of a permutation complex generated by 15 permutations in $\ccs_{4}$:
\[\{1234, 1243, 1324, 1432, 2143, 2314, 2341, 2413, 3124, 3142, 3214, 3421, 4123, 4231, 4312\}\]
 that realizes the homotopy type of $\Sigma \Rea\PPR^2$. Thus it seems reasonable to conjecture that there may be more economical ways to stably realize homotopy types than what is claimed in Theorem \ref{Thm:uniform}.



\section{Complexes Generated by Two Permutations}
\label{Sec:two}
In this section we study the homotopy types of permutation complexes $X(\sigma_1,\sigma_2)$, for $\sigma_1,\sigma_2 \in \ccs_n$. As $X(\sigma_1), X(\sigma_1)$ are both contractible, it follows by Lemma \ref{c:union} that
\begin{equation}
\label{e:unioni}
X(\sigma_1,\sigma_2)=X(\sigma_1) \cup X(\sigma_2) \simeq
\Sigma \big(X(\sigma_1) \cap X(\sigma_2)\big).
\end{equation}
Hence it suffices to analyse the intersections $X(\sigma_1) \cap X(\sigma_2)$.
It will be useful to consider the following, essentially equivalent, setup.

\begin{definition}
\label{2-perm}
Let $(R,\prec)$ be a finite linearly ordered set and let $\phi:R \rightarrow \Rea$ be an injective
function. Let $Y(R,\phi)$ be the order complex of the poset $(R,\prec')$ where
$r \prec' r'$ if both $r \prec r'$ and $\phi(r) < \phi(r')$.
A triple $(R,\prec, \phi)$ will be referred to as an \emph{intersection triple}.
\end{definition}

For example, if $(R, \prec)$ is the set $[n]$ with its natural order and $\sigma \in \ccs_{n}$ is a permutation, viewed as an injective function from $[n]$ to $\Rea$, then
$Y([n], \phi)=X(\text{id}_n) \cap X(\sigma)$, where $\text{id}_n$ denotes the identity permutation in $\ccs_n$.  We now determine the possible homotopy types of $Y(R,\phi)$.

\begin{lemma}
\label{c:mv}
Let $(R,\prec, \phi)$ be an intersection triple, as in Definition \ref{2-perm}, and fix some $r \in R$. Let $R'=R-\{r\}$ and let $\phi'$ denote the restriction of $\phi$ to $R'$.
Let
$$
R''=\{t \in R' \; : \; t \prec r~~ \text{and} ~~\phi(t)<\phi(r)\} \cup
\{t \in R' \; : \; t \succ r ~~\text{and}~~ \phi(t)>\phi(r)\}
$$
and let $\phi''$ denote the restriction of $\phi$ to $R''$.
Then
\begin{equation}
\label{e:de}
\begin{split}
Y(R,\phi)&=Y(R',\phi') \cup \bigl(\{r\}*Y(R'',\phi'')\bigr) \\
 Y(R'',\phi'')&=Y(R',\phi') \cap \bigl(\{r\}*Y(R'',\phi'')\bigr).
\end{split}
\end{equation}
\noindent
Furthermore, there is an exact sequence
\begin{equation}
\label{e:mv}
\cdots \rightarrow \thh_k(Y(R'',\phi'')) \rightarrow \thh_k(Y(R',\phi')) \rightarrow \thh_k(Y(R,\phi))
\rightarrow \thh_{k-1}(Y(R'',\phi'')) \rightarrow  \cdots
\end{equation}
\end{lemma}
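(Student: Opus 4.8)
The plan is to deduce everything from the observation that $\prec'$ is a genuine strict partial order on $R$: it is the intersection of the linear order $\prec$ with the linear order induced by the injective function $\phi$. Consequently $Y(R,\phi)$, $Y(R',\phi')$ and $Y(R'',\phi'')$ are order complexes whose simplices are exactly the finite $\prec'$-chains in $R$, $R'$ and $R''$ respectively, all viewed as subcomplexes of the full simplex on the vertex set $R$ (note that $\{r\}*Y(R'',\phi'')$ has vertex set $\{r\}\cup R''\subseteq R$). The first thing I would record is that $R''$ is precisely the set of elements of $R'$ that are $\prec'$-comparable to $r$: indeed $t\prec' r$ means $t\prec r$ and $\phi(t)<\phi(r)$, while $r\prec' t$ means $r\prec t$ and $\phi(r)<\phi(t)$, and the disjunction of these two conditions is exactly the condition defining $R''$.

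Next I would prove the set-theoretic identities (\ref{e:de}) by comparing simplices. Let $S$ be a simplex of $Y(R,\phi)$, i.e.\ a $\prec'$-chain. If $r\notin S$ then $S\subseteq R'$ and, since the restriction of $\prec'$ to $R'$ coincides with the relation defining $Y(R',\phi')$, we get $S\in Y(R',\phi')$. If $r\in S$ then every other element of $S$ is $\prec'$-comparable to $r$, hence lies in $R''$, so $S\setminus\{r\}$ is a $\prec'$-chain in $R''$ and $S\in\{r\}*Y(R'',\phi'')$. Conversely, simplices of $Y(R',\phi')$ are $\prec'$-chains in $R$, and a set of the form $\{r\}\cup B$ with $B$ a $\prec'$-chain in $R''$ is again a $\prec'$-chain, because each element of $B$ is $\prec'$-comparable to $r$, the elements of $B$ are mutually $\prec'$-comparable, and $\prec'$ is transitive; this is the one step that requires a small argument. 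Together these inclusions give $Y(R,\phi)=Y(R',\phi')\cup(\{r\}*Y(R'',\phi''))$. For the intersection, observe that $Y(R',\phi')$ contains no simplex through $r$ (since $r\notin R'$), so any simplex lying in both $Y(R',\phi')$ and the cone $\{r\}*Y(R'',\phi'')$ must avoid $r$ and hence be a simplex of $Y(R'',\phi'')$; the reverse inclusion is immediate, which proves the second line of (\ref{e:de}).

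Finally, for the exact sequence (\ref{e:mv}) I would invoke Mayer--Vietoris for the decomposition of $Y(R,\phi)$ into the subcomplexes $U=Y(R',\phi')$ and $W=\{r\}*Y(R'',\phi'')$ with $U\cap W=Y(R'',\phi'')$ --- this arises from the short exact sequence of augmented simplicial chain complexes $0\to C_*(U\cap W)\to C_*(U)\oplus C_*(W)\to C_*(U\cup W)\to 0$ and requires nothing beyond $U$ and $W$ being subcomplexes. Since $W$ is a cone it is contractible, so $\thh_*(W)=0$ and the term $\thh_k(U)\oplus\thh_k(W)$ reduces to $\thh_k(Y(R',\phi'))$, yielding exactly (\ref{e:mv}). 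I do not anticipate a real obstacle here: the combinatorial decomposition (\ref{e:de}) is essentially forced once one notes that $R''$ captures exactly $\prec'$-comparability with $r$, and the only place calling for care is checking that adjoining $r$ to a $\prec'$-chain in $R''$ keeps it a chain, which is transitivity of $\prec'$.
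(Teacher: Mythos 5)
Your proof is correct and follows exactly the route the paper takes: the paper simply asserts that the decomposition (\ref{e:de}) is straightforward and then applies Mayer--Vietoris using the contractibility of the cone $\{r\}*Y(R'',\phi'')$, which is precisely what you do, with the combinatorial details (that $R''$ is the set of elements $\prec'$-comparable to $r$, and the case split on whether a chain contains $r$) spelled out. No gaps.
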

\noindent
\begin{proof} The decomposition (\ref{e:de}) is straightforward.  The exact sequence (\ref{e:mv}) follows from (\ref{e:de})
and the Mayer-Vietoris theorem, since $\{r\}*Y(R'',\phi'')$ is a cone.
\end{proof}
\begin{definition}
\label{d:alternating}
Let $(R,\prec,\phi)$ be an intersection triple. A \emph{$k$-alternating sequence in $(R,\phi)$} is an increasing chain $r_1\prec \cdots \prec r_{2k} \in R$ such that $\phi(r_{2i}) < \phi (r_{2i-1})$ for all $1 \leq i \leq k$ and $\phi(r_{2i-1}) < \phi(r_{2i+2})$ for all $1 \leq i \leq k-1$. Let $\alpha(R,\phi)$ denote the maximal $k$ such that $R$ contains a $k$-alternating sequence.
\end{definition}
\noindent
For a space $Y$, let $\hd(Y)=\max\{k: \thh_k(Y) \neq 0\}$ if $Y$ is not $\Int$-acyclic, and $\hd(Y)=-1$ otherwise.
The main result of this section is the following extended version of Theorem \ref{t:twoper}.
Parts (a) and (b) of the two statements are equivalent
by Eq. (\ref{e:unioni}).
\begin{theorem}
\label{thm:hom}
$~$
\begin{itemize}
\item[(a)]
Let $(R,\prec, \phi)$ be an intersection triple. Then $Y(R,\phi)$ is either contractible or is homotopy equivalent to a wedge of spheres.
\item[(b)]
for any $k_1,\ldots,k_m \geq 0$ there exist $R$ and $\phi$ such that
$$
Y(R,\phi)\simeq S^{k_1} \vee \cdots \vee S^{k_m}.
$$
\item[(c)]
$\alpha(R,\phi) \geq \hd(Y(R,\phi))+1$.
\end{itemize}
\end{theorem}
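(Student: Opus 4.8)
The plan is to prove (a) and (c) together by induction on $|R|$ using Lemma \ref{c:mv}, and to prove (b) by an explicit recursive construction of the underlying poset. Two preliminary remarks. First, $Y(R,\phi)=\Delta(R,\prec')$, and as the triple $(R,\prec,\phi)$ varies, $(R,\prec')$ ranges over exactly the finite posets of order dimension $\le 2$: it is the intersection of $\prec$ with the linear order induced by $\phi$, and conversely a poset $P=L_1\cap L_2$ is realized by taking $\prec\,=L_1$ and $\phi$ strictly increasing along $L_2$. Second, if $\prec'=\,\prec$ — equivalently $\alpha(R,\phi)=0$ — then $Y(R,\phi)$ is a simplex, hence contractible, and (c) holds vacuously; so for (a) and (c) we may assume $\alpha(R,\phi)\ge 1$. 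For the inductive step choose $r$ to be a $\prec'$-maximal element of $R$ (the $\prec$-maximal element always qualifies); then, in the notation of Lemma \ref{c:mv}, $R''$ is precisely the principal order ideal $\{t\in R:t\prec' r\}$, the triples $(R',\prec,\phi')$ and $(R'',\prec,\phi'')$ have fewer elements, $Y(R'',\phi'')$ is the full subcomplex of $Y(R',\phi')$ spanned by $R''$, and by (\ref{e:de}) the complex $Y(R,\phi)=Y(R',\phi')\cup(\{r\}*Y(R'',\phi''))$ is the mapping cone — equivalently the quotient $Y(R',\phi')/Y(R'',\phi'')$ — of the subcomplex inclusion $\iota\colon Y(R'',\phi'')\hookrightarrow Y(R',\phi')$.

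For part (a): if $Y(R'',\phi'')$ is contractible, then $Y(R'',\phi'')\hookrightarrow\{r\}*Y(R'',\phi'')$ is an acyclic cofibration, so the gluing lemma gives $Y(R,\phi)\simeq Y(R',\phi')$, which is a wedge of spheres (or contractible) by induction. If $Y(R'',\phi'')$ is not contractible, then by the inductive form of (a) both $Y(R',\phi')$ and $Y(R'',\phi'')$ are wedges of spheres, and it suffices to prove that $\iota$ is null-homotopic, for then its mapping cone is $Y(R',\phi')\vee\Sigma Y(R'',\phi'')$, again a wedge of spheres. To see $\iota$ is null-homotopic it is enough to exhibit a vertex $z$ of $Y(R',\phi')$ that is $\prec'$-comparable to every element of $R''$: then $C\cup\{z\}$ is a chain for each chain $C$ of $R''$, so $Y(R'',\phi'')$ lies in the contractible closed star of $z$. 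Producing such a $z$ — or, failing that, showing directly that the cofiber of $\iota$ is a wedge of spheres — is the delicate point; it forces the precise choice among the $\prec'$-maximal elements of $R$, and I expect it to be the main obstacle for (a).

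For part (c): the Mayer–Vietoris sequence (\ref{e:mv}) gives $\hd Y(R,\phi)\le\max\{\hd Y(R',\phi'),\ \hd Y(R'',\phi'')+1\}$. Since $R'\subset R$ implies $\alpha(R',\phi')\le\alpha(R,\phi)$, induction yields $\hd Y(R',\phi')\le\alpha(R',\phi')-1\le\alpha(R,\phi)-1$. If $Y(R'',\phi'')$ is $\Int$-acyclic then $\hd Y(R'',\phi'')+1\le 0\le\alpha(R,\phi)-1$ (using $\alpha(R,\phi)\ge 1$), and we are done. Otherwise it remains to show $\alpha(R'',\phi'')\le\alpha(R,\phi)-1$, i.e. that a $k$-alternating sequence $s_1\prec\cdots\prec s_{2k}$ in $R''$ extends to a $(k+1)$-alternating sequence in $R$. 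Here one uses that $R''=\{t:t\prec' r\}$, so every $s_i\prec r$ and $\phi(s_i)<\phi(r)$; in particular $\phi(r)$ exceeds $\phi(s_1),\phi(s_3),\dots,\phi(s_{2k-1})$, and one appends to the sequence a new descending pair built from $r$ and one more carefully chosen element, arranged so that all the alternating inequalities hold. Producing that extra element — equivalently, verifying the extension really goes through — is the combinatorial core of (c), and it again pins down which $\prec'$-maximal $r$ must be removed; configurations where this appears to fail are handled by analysing $Y(R,\phi)$ directly. This alternating-sequence bookkeeping, together with the matching choice of $r$ in part (a), is the hardest part of the proof.

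For part (b): by the first paragraph it suffices to realize $\bigvee_{i=1}^m S^{k_i}$, with $0\le k_1\le\cdots\le k_m$, as $\Delta(P)$ for a poset $P$ of order dimension $\le 2$, and we build $P$ by recursion on $m$. For $m=1$ take $P$ to be the ordinal sum of $k_1+1$ two-element antichains, so that $\Delta(P)\cong S^0*\cdots*S^0\simeq S^{k_1}$ ($k_1+1$ factors). For $m\ge 2$: if $k_1\ge 1$ put $P=(A_1<\cdots<A_{k_1})\oplus(\{x\}\sqcup Q)$, where each $A_j$ is a two-element antichain, $\oplus$ is ordinal sum, $\sqcup$ disjoint union, $x$ a single point, and $Q$ is (recursively) a poset with $\Delta(Q)\simeq\bigvee_{i\ge 2}S^{k_i-k_1}$; if $k_1=0$ put $P=\{x\}\sqcup Q$ with $\Delta(Q)\simeq\bigvee_{i\ge 2}S^{k_i}$. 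Order dimension $\le 2$ is preserved by ordinal sum, disjoint union, and adjoining an isolated point, so $P$ qualifies. Finally $\Delta(P)\cong S^{k_1-1}*(\{x\}\sqcup\Delta(Q))$, and since the join commutes with disjoint unions and with wedges, and since the join factor $S^{k_1-1}$ bounds a disk inside every wedge summand $S^{k_i}$ (because $k_1-1<k_i$), one obtains $\Delta(P)\simeq D^{k_1}\cup_{S^{k_1-1}}\bigvee_{i\ge 2}S^{k_i}\simeq S^{k_1}\vee\bigvee_{i\ge 2}S^{k_i}$. This part is routine; the only care needed is to track genuine homotopy equivalences (not merely homology) in the join computations and to dispose of the empty and low-dimensional edge cases of the recursion.
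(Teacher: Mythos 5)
Your induction framework for (a) and (c) --- remove one element, use the decomposition and Mayer--Vietoris sequence of Lemma \ref{c:mv}, and show that the inclusion $Y(R'',\phi'')\hookrightarrow Y(R',\phi')$ is either an equivalence-inducing cofibration or null-homotopic --- is exactly the right shape, and your part (b) is essentially the paper's construction (ordinal sum with an octahedral sphere plus a disjoint point, recursing on $m$ after subtracting the smallest $k_i$). But the two steps you yourself flag as ``the main obstacle'' and ``the combinatorial core'' are genuinely missing, and your choice of $r$ as a $\prec'$-maximal element is not the one that makes them go through. With $r$ $\prec'$-maximal, $R''=\{t:t\prec' r\}$ is an arbitrary principal order ideal; there is in general no vertex $z$ of $R'$ that is $\prec'$-comparable to all of $R''$, and a $k$-alternating sequence in $R''$ need not extend by a descending pair through $r$ (that would require some $u\succ r$ with $\phi(u)<\phi(r)$, which contradicts $\prec'$-maximality of $r$). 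So as written, both the null-homotopy claim in (a) and the extension claim in (c) are unproved, and the fallback ``analyse $Y(R,\phi)$ directly'' is not an argument.

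The missing idea in the paper is to select $r$ not by $\prec'$-maximality but by the \emph{values of $\phi$}: let $r_1,r_2$ be the elements with the smallest and second-smallest $\phi$-value. If $r_1\prec r_2$, remove $r=r_2$; then every $t\in R''$ other than $r_1$ satisfies $t\succ r_2\succ r_1$ and $\phi(t)>\phi(r_2)>\phi(r_1)$, so $Y(R'',\phi'')$ is a cone with apex $r_1$ and $Y(R,\phi)\simeq Y(R',\phi')$. If $r_2\prec r_1$, remove $r=r_1$; then $R''=\{t:t\succ r_1\}$ (minimality of $\phi(r_1)$ kills the other half of $R''$), and $r_2$ is $\prec'$-below all of $R''$ (it is $\prec$-below via $r_2\prec r_1\prec t$, and $\phi(r_2)<\phi(t)$ since $\phi(r_2)$ is second-smallest), so $\{r_2\}*Y(R'',\phi'')\subset Y(R',\phi')$ exhibits the null-homotopy you need, giving $Y(R,\phi)\simeq Y(R',\phi')\vee\Sigma Y(R'',\phi'')$. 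The same choice resolves (c): in case (ii) one \emph{prepends} $r_2\prec r_1$ to a $k$-alternating sequence $t_1\prec\cdots\prec t_{2k}$ of $R''$, and the required inequalities $\phi(r_1)<\phi(r_2)<\phi(t_2)$ hold automatically because $\phi(r_1),\phi(r_2)$ are the two smallest values, yielding $\alpha(R,\phi)\ge\alpha(R'',\phi'')+1$. Without this specific choice of $r$ the induction does not close, so the proposal as it stands has a genuine gap in (a) and (c).
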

\begin{proof}
Parts (a) and (c) will be proved simultaneously by induction on $|R|$. The case $|R|=1$ is clear.
Assume that $|R| \geq 2$ and let $r_1,r_2 \in R$ be the elements with the smallest, respectively second smallest, value of $\phi$, i.e.
$$
\phi(r_1)=\min\{\phi(t)\; : \; t \in R \}
$$
and
$$
\phi(r_2)=\min\{\phi(t)\;|\; r_1 \neq t \in R \}.
$$
Using the notation of Lemma \ref{c:mv}, we consider two cases:

\noindent {\bf (i)} $r_1 \prec r_2$:
Set $r=r_2$. Then
$$
R'=R-\{r_2\}
$$
and
$$
R''=\{r_1\} \cup \{t\in R \;|\; t \succ r_2\}.
$$
Clearly, $Y(R'',\phi'')$ is a cone on $r_1$ and is therefore contractible. By the decomposition (\ref{e:de}) above, there is a homotopy equivalence
\begin{equation}
\label{e:case1}
Y(R,\phi) \simeq Y(R',\phi').
\end{equation}
Thus (a) follows directly from the induction hypothesis, while for (c) we also use the monotonicity
$\alpha(R,\phi) \geq \alpha(R',\phi')$.

\medskip
\noindent {\bf (ii)} $r_2\prec r_1$: Set $r=r_1$.
Then
$$
R'=R-\{r_1\}
$$
and
$$
R''=\{t\in R \;|\; t \succ r_1\}.
$$
Note that $\{r_2\}*Y(R'',\phi'') \subset Y(R',\phi')$ and therefore $Y(R'',\phi'')$ is
contractible in $Y(R',\phi')$.
Again, by (\ref{e:de}) there is a homotopy equivalence
\begin{equation}
\label{e:case2}
Y(R,\phi) \simeq Y(R',\phi') \vee \Sigma Y(R'',\phi''),
\end{equation}
and (a) follows by induction. To show (c), observe that if $t_1\prec \cdots \prec t_{2k}$ is a $k$-alternating
sequence in $(R'',\phi'')$, then $r_2 \prec r_1 \prec t_1 \prec \cdots \prec t_{2k}$ is a $(k+1)$-alternating
sequence in $(R,\phi)$. Hence
\begin{equation}
\label{e:asa}
\alpha(R,\phi) \geq \alpha(R'',\phi'')+1.
\end{equation}
Combining (\ref{e:asa}) with the induction hypothesis and with (\ref{e:case2}), we obtain
\begin{equation*}
\label{e:asa1}
\begin{split}
\alpha(R,\phi) &\geq \max\{\alpha(R',\phi'),\alpha(R'',\phi'')+1\} \\
&\geq \max\{ \hd\left(Y(R',\phi')\right)+1,\hd\left(Y(R'',\phi'')\right)+2\}  \\
&\geq \max\{ \hd\left(Y(R',\phi')\right),\hd\left(\Sigma Y(R'',\phi'')\right)\}+1  \\
&=\hd \left(Y(R,\phi)\right)+1.
\end{split}
\end{equation*}
\noindent
We next prove part (b), showing that every finite wedge of spheres $S^{k_1} \vee \cdots \vee S^{k_m}$ is homotopy equivalent to a space of type $Y(R, \phi)$. We argue by induction on $m$. For $m=1$, $k_1=k \geq 0$, let $R=[2k+2]$ and let
$\phi$ denote the permutation given by $\phi(2i-1)=2i, \phi(2i)=2i-1$ for $1 \leq i \leq k$.
Then $Y(R,\phi)$ is the octahedral $k$-sphere. Suppose now that $m \geq 2$. We may assume that
$k_1,\ldots,k_{m-1} \geq k_m$. By induction there exists a linearly ordered set $(R_1,\prec_1)$
and an injective function $\phi_1:R_1\rightarrow \Rea$ such that
$Y(R_1,\phi_1) \simeq S^{k_1-k_m} \vee \cdots \vee S^{k_{m-1}-k_m}$. We consider two cases:

\noindent
(1) $k_m=0$. Let $t$ be a new element $t \not\in R_1$ and let $R=R_1 \cup \{t\}$. Let $\prec$ be the linear order on $R$ given by $r_1 \prec r_1' \prec t$ if $r_1 \prec_1 r_1' \in R_1$. Define $\phi: R \rightarrow \Rea$ by $\phi(r_1)=\phi_1(r_1)$ for $r_1 \in R_1$ and $\phi(t)<
\min\{\phi_1(r_1):r_1 \in R_1\}$. Then
$$
Y(R,\phi)=Y(R_1,\phi_1) \cup \{t\} \simeq S^{k_1} \vee \cdots \vee S^{k_{m-1}} \vee S^0 =
S^{k_1} \vee \cdots \vee S^{k_{m}}.
$$
\noindent
(2) $k_m \geq 1$. By the induction basis $m=1$ there exists an $(R_2,\prec_2)$, and an injective function
$\phi_2:R_2 \rightarrow \Rea$, such that $Y(R_2,\phi_2)\simeq S^{k_{m}-1}$.
We may assume that
$R_1 \cap R_2=\emptyset$ and $\phi_1(r_1)<\phi_2(r_2)$ for all $(r_1,r_2) \in R_1 \times R_2$.
Let $t$ be a new element and let $R= R_1 \cup \{t\} \cup R_2$. Define a linear order $\prec$ on $R$ by $r_1 \prec t \prec r_2$ for all $(r_1,r_2) \in R_1 \times R_2$,
and $r_i \prec r_i'$ if and only if $r_i \prec_i r_i'$ for $(r_i,r_i') \in R_i \times R_i$ and
$i \in \{1,2\}$.
Define $\phi:R \rightarrow \Rea$ by $\phi(r_i)=\phi_i(r_i)$ for $r_i \in R_i$ and
$\phi(t) < \min_{r_1 \in R_1}\phi_1(r_1)$. Writing $R'=R_1 \cup R_2$ and letting $\phi'$ be the restriction of $\phi$ to $R'$, it is clear that $Y(R',\phi') \simeq Y(R_1,\phi_1)*Y(R_2,\phi_2)$.
By case (ii) of part (a) above we have
\begin{equation}
\label{e:wedges}
\begin{split}
Y(R,\phi) &\simeq Y(R',\phi') \vee \Sigma Y(R_2,\phi_2) \\
&\simeq\big(Y(R_1,\phi_1)*Y(R_2,\phi_2)\big) \vee \Sigma Y(R_2,\phi_2)  \\
&\simeq \left(\left(S^{k_1-k_m} \vee \cdots \vee S^{k_{m-1}-k_m}\right)*S^{k_m-1}\right)
\vee \Sigma S^{k_m-1} \\
&\simeq \left(S^{k_1} \vee \cdots \vee S^{k_{m-1}}\right) \vee S^{k_m} \\
&\simeq S^{k_1} \vee \cdots \vee S^{k_{m}}.
\end{split}
\end{equation}
\end{proof}

\begin{example}
For $S=[n]$ and $\phi(i)=a_i$,  write
$Y(a_1,\ldots,a_n)=Y(S,\phi)$.

\begin{equation}
\label{e:example}
\begin{split}
Y(3,5,8,1,7,2,4,6) &\simeq Y(3,5,8,1,7,4,6) \\
&\simeq Y(3,5,8,7,4,6) \vee \Sigma Y(7,4,6) \\
&\simeq Y(3,5,8,7,6) \vee \Sigma Y(7,4) \\
&\simeq \Sigma Y(7,4) \simeq S^1.
\end{split}
\end{equation}
The first and third equivalences are case (i), while the second is case (ii) in the proof of Theorem \ref{thm:hom}.
\end{example}

\section{The Homotopy Type of $\Gamma(K)$}
\label{s:inj}

In this section we prove Theorem \ref{t:hty} on the homotopy type of the complex of injective words $\Gamma(K)$ associated with a connected simplicial complex $K$. Our main tool is a powerful homotopy decomposition theorem due to
Bj\"{o}rner, Wachs and Welker \cite{BWW05}. We first recall some definitions. Let $(P,<_P)$, $(Q,<_Q)$ be two posets. A map $f:P \rightarrow Q$ is a {\it poset map} if
$x \leq_P y$ implies $f(x) \leq_Q f(y)$.
For an element $q \in Q$
let $Q_{<q}=\{x \in Q: x <_Q q\}$. The subsets $Q_{\leq q}$ and $Q_{> q}$ are defined similarly.
The {\it length} $\ell(Q)$ of $Q$ is the number of elements in a maximal chain of $Q$ minus $1$,
i.e., $\ell(Q)=\dim \Delta(Q)$.
\begin{theorem}[\cite{BWW05}]
\label{t:bww}
Let $f:P \rightarrow Q$ be a poset map such that $\Delta(Q)$ is connected and for all $q \in Q$ the fiber $\Delta(f^{-1}(Q_{\leq q}))$ is $\ell(f^{-1}(Q_{<q}))$-connected.
Then
\begin{equation}
\label{e:bww}
\Delta(P)\simeq \Delta(Q) \vee \bigvee_{q \in Q} \Delta\left(f^{-1}(Q_{\leq q})\right)* \Delta(Q_{>q}).
\end{equation}
\end{theorem}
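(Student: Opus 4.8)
The plan is to prove the decomposition by induction on $|Q|$, gluing along a maximal element and using the connectivity hypothesis precisely to trivialize one of the two gluing maps. First I would dispose of the degenerate case $|Q|=1$, where $Q_{>q}=\emptyset$, $\Delta(Q)$ is a point, and (\ref{e:bww}) reads $\Delta(P)\simeq\Delta(P)$. For the inductive step I fix a maximal element $q_0\in Q$, set $Q'=Q\setminus\{q_0\}$, $P'=f^{-1}(Q')$, and $f'=f|_{P'}:P'\to Q'$. Writing $A=\Delta(P')$, $B=\Delta(f^{-1}(Q_{\le q_0}))$ and $C=\Delta(f^{-1}(Q_{<q_0}))$, the first step is the purely combinatorial identity
$$\Delta(P)=A\cup B,\qquad A\cap B=C.$$
The inclusion $A\cup B\subseteq\Delta(P)$ is clear; for the reverse one checks that a chain in $P$ cannot simultaneously contain an element $p$ with $f(p)\not\le q_0$ (which forces $p\notin B$) and an element $p'$ with $f(p')=q_0$ (which forces $p'\notin A$): if such $p,p'$ were comparable, then either $f(p)\le f(p')=q_0$, or $q_0=f(p')\le f(p)$ and maximality of $q_0$ gives $f(p)=q_0\le q_0$; both contradict $f(p)\not\le q_0$. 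The identification $A\cap B=C$ follows from $Q'\cap Q_{\le q_0}=Q_{<q_0}$ together with $\Delta(X)\cap\Delta(Y)=\Delta(X\cap Y)$ for subposets.

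The crux is the following nullhomotopy. By the definition $\ell(Q)=\dim\Delta(Q)$ we have $\dim C=\ell(f^{-1}(Q_{<q_0}))$, and the hypothesis applied at $q=q_0$ says that $B$ is $\ell(f^{-1}(Q_{<q_0}))$-connected. Thus $C\hookrightarrow B$ is a map from a CW complex into a space whose connectivity is at least its dimension, hence nullhomotopic. I then invoke the elementary gluing lemma: if $C\hookrightarrow B$ is nullhomotopic (and both inclusions into $A\cup B$ are cofibrations, automatic for simplicial subcomplexes), then
$$A\cup B\ \simeq\ B\ \vee\ \operatorname{Cone}(C\hookrightarrow A),$$
where $\operatorname{Cone}(C\hookrightarrow A)$ is the mapping cone of the inclusion. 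This is verified on homology by Mayer--Vietoris, where the map $\thh_*(C)\to\thh_*(B)$ vanishes because $C\hookrightarrow B$ is null, so $\thh_*(B)$ splits off and the rest is the homology of the mapping cone; the homology splitting is promoted to a genuine homotopy equivalence by Whitehead's theorem, which is where the assumption that $\Delta(Q)$ is connected (ensuring simple connectivity of $\Delta(P)$) enters. Since $q_0$ is maximal, $Q_{>q_0}=\emptyset$, so $\Delta(f^{-1}(Q_{\le q_0}))*\Delta(Q_{>q_0})=B$ is exactly the $q=q_0$ summand of (\ref{e:bww}).

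It remains to expand $\operatorname{Cone}(C\hookrightarrow A)$. Applying the induction hypothesis to $f':P'\to Q'$ gives $A\simeq\Delta(Q')\vee\bigvee_{q\in Q'}\Delta(f^{-1}(Q_{\le q}))*\Delta(Q'_{>q})$ (using $Q'_{\le q}=Q_{\le q}$ for $q\neq q_0$), and the final task is to show that attaching a cone on $C=\Delta(f^{-1}(Q_{<q_0}))$ (i) turns $\Delta(Q')$ into $\Delta(Q)$, since $\Delta(Q)=\Delta(Q')\cup\bigl(\{q_0\}*\Delta(Q_{<q_0})\bigr)$ is precisely $\Delta(Q')$ with a cone glued on $\Delta(Q_{<q_0})$, and (ii) upgrades each lower summand's base $\Delta(Q'_{>q})$ to $\Delta(Q_{>q})$ by reattaching $q_0$ for those $q<q_0$. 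This bookkeeping — matching the single cone on $C$ against the whole family of join factors $\Delta(Q_{>q})$ for $q\le q_0$, and tracking the suspension coordinates — is the step I expect to be the main obstacle, since $C$ interacts with several summands at once and one must also keep control of the repeated Whitehead promotions. A cleaner, if less elementary, route avoiding this is to identify $\Delta(P)\simeq\hocolim_{q\in Q}\Delta(f^{-1}(Q_{\le q}))$ and apply the Ziegler--\v{Z}ivaljevi\'c wedge lemma, in which the same connectivity estimate makes the homotopy colimit split off all the summands $\Delta(f^{-1}(Q_{\le q}))*\Delta(Q_{>q})$ over the base $\Delta(Q)$ in a single stroke.
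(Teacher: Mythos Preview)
The paper does not prove this theorem; it is quoted from \cite{BWW05} (the Bj\"orner--Wachs--Welker poset fiber theorem) and invoked as a black box in the proof of Theorem~\ref{t:hty}. So there is no argument in the paper to compare against.

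That said, your inductive sketch has genuine gaps. First, removing a maximal element $q_0$ from a connected $Q$ can disconnect $Q'$, so the hypothesis ``$\Delta(Q)$ connected'' need not hold for $f':P'\to Q'$ and the induction hypothesis may simply be unavailable. Second, your justification of the gluing step $A\cup B\simeq B\vee\operatorname{Cone}(C\hookrightarrow A)$ via Mayer--Vietoris and Whitehead is incorrect: connectedness of $\Delta(Q)$ does not imply simple connectivity of $\Delta(P)$ (take $|Q|=1$ and $P$ any poset with non-simply-connected order complex; the fiber hypothesis is vacuous there). The gluing lemma itself is true, but by homotopy invariance of homotopy pushouts once $C\hookrightarrow B$ is null, not by Whitehead. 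Third, and most seriously, the step you yourself flag as ``the main obstacle'' is not resolved and is not merely bookkeeping: the inductive decomposition of $A$ is only a homotopy equivalence, so you have no control over where the subcomplex $C$ sits inside the wedge, and there is no evident way to match a single cone on $C$ against all the join factors $\Delta(Q_{>q})$ for $q<q_0$ simultaneously. Your closing sentence in fact points to the actual proof: \cite{BWW05} proceeds via the identification $\Delta(P)\simeq\hocolim_{q\in Q}\Delta(f^{-1}(Q_{\le q}))$ and a wedge lemma for diagrams of spaces in the Welker--Ziegler--\v{Z}ivaljevi\'c framework, which produces all summands of (\ref{e:bww}) at once and avoids the difficulties above.
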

\noindent
{\it Proof of Theorem \ref{t:hty}.}
Recall that $K$ is a finite simplicial complex and $(P(K),\prec)$ is the poset of nonempty faces of
$K$ ordered by inclusion.
The map $g:P(K)_{\succ \sigma} \rightarrow P(\lk(K,\sigma))$ given by
$g(\tau)=\tau \setminus \sigma$ is a poset isomorphism. It follows that
\begin{equation}
\label{e:sdlink}
\begin{split}
\Delta\left(P(K)_{\succ \sigma}\right) &\cong \Delta\left(P(\lk(K,\sigma))\right) \\
&=\sd \lk(K,\sigma) \simeq \lk(K,\sigma).
\end{split}
\end{equation}
\noindent
Next consider the poset map $f:M(K) \rightarrow P(K)$ given by
$f(v_0 \cdots v_k)=\{v_0,\ldots,v_k\}$.
Observe that if $\sigma \in P(K)$ then
$f^{-1}\big(P(K)_{\preccurlyeq {\sigma}}\big)=\inj(\sigma)$. Hence, by Theorem \ref{t:bw}
\begin{equation}
\label{e:invimg}
\Delta\Big(f^{-1}\big(P(K)_{\preccurlyeq \sigma}\big)\Big)
\cong \Delta\left(\inj(\sigma)\right)=\Gamma(\sigma) \simeq \bigvee_{D(|\sigma|)} S^{|\sigma|-1}.
\end{equation}
Therefore $\Delta\Big(f^{-1}\big((P(K))_{\prec {\sigma}}\big)\Big)$
is $(|\sigma|-2)$-connected. As $\ell\Big(f^{-1}\big(P(K)_{\prec {\sigma}}\big)\Big)=|\sigma|-2$, it follows that the poset map $f$ satisfies the conditions of Theorem \ref{t:bww}. Using (\ref{e:bww}) and (\ref{e:invimg}) we obtain
\begin{equation}
\label{e:bww1}
\begin{split}
\Gamma(K)&=\Delta(M(K))\simeq \Delta(P(K)) \vee \bigvee_{\emptyset \neq \sigma \in X}
\Delta\Big(f^{-1}\big((P(K))_{\preccurlyeq \sigma}\big)\Big)*
\Delta\big((P(K))_{\succ \sigma}\big) \\
&\simeq K \vee \bigvee_{\emptyset \neq \sigma \in K} \Gamma(\sigma)*\lk(K,\sigma) \simeq
K \vee \bigvee_{\emptyset \neq \sigma \in K}
\bigvee_{D(|\sigma|)} S^{|\sigma|-1}*\lk(K,\sigma) \\
&\simeq \bigvee_{\sigma \in K} \bigvee_{D(|\sigma|)}\Sigma^{|\sigma|}\lk(K,\sigma).
\end{split}
\end{equation}
\enp
\noindent
As an immediate corollary we obtain the homological version of Theorem \ref{t:hty}.
\begin{corollary}
\label{c:homg}
Let $K$ be a connected simplicial complex and let $A$ be an abelian group. Then for $r \geq 0$
\begin{equation}
\label{e:ahr}
\thh_r\big(\Gamma(K); A\big) \cong \bigoplus_{\sigma \in K} \thh_{r-|\sigma|}\big(\lk(K,\sigma); A\big)^{\oplus D(|\sigma|)}.
\end{equation}
\end{corollary}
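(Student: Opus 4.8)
The plan is to obtain Corollary \ref{c:homg} as a purely formal consequence of Theorem \ref{t:hty} by applying reduced homology with coefficients in $A$ to the homotopy equivalence $\Gamma(K)\simeq\bigvee_{\sigma\in K}\bigvee_{D(|\sigma|)}\Sigma^{|\sigma|}\lk(K,\sigma)$. First I would note that reduced homology is a homotopy invariant, so Theorem \ref{t:hty} (whose hypothesis that $K$ be connected is exactly the hypothesis of the corollary) gives, for every $r\ge 0$, an isomorphism $\thh_r(\Gamma(K);A)\cong\thh_r\big(\bigvee_{\sigma\in K}\bigvee_{D(|\sigma|)}\Sigma^{|\sigma|}\lk(K,\sigma);A\big)$.

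Next I would invoke the two standard computations. Since a wedge of pointed CW complexes has reduced homology equal to the direct sum of the reduced homologies of its summands, the right-hand side rewrites as $\bigoplus_{\sigma\in K}\thh_r\big(\Sigma^{|\sigma|}\lk(K,\sigma);A\big)^{\oplus D(|\sigma|)}$. Then the suspension isomorphism, iterated $|\sigma|$ times, gives $\thh_r\big(\Sigma^{|\sigma|}\lk(K,\sigma);A\big)\cong\thh_{r-|\sigma|}\big(\lk(K,\sigma);A\big)$, and substituting this in yields precisely $\thh_r(\Gamma(K);A)\cong\bigoplus_{\sigma\in K}\thh_{r-|\sigma|}(\lk(K,\sigma);A)^{\oplus D(|\sigma|)}$, which is formula (\ref{e:ahr}).

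The only point calling for a remark — and it is routine bookkeeping rather than an obstacle — is the interpretation of the extreme terms in the indexed sum. For $\sigma=\emptyset$ one has $D(0)=1$ and $\lk(K,\emptyset)=K$, so that summand contributes exactly $\thh_r(K;A)$; for $|\sigma|=1$ the coefficient $D(1)=0$ annihilates the term; and when $\sigma$ is a facet, $\lk(K,\sigma)$ has empty geometric realization, $\Sigma^{|\sigma|}\emptyset\simeq S^{|\sigma|-1}$ under the suspension convention fixed in the introduction, and the required equality $\thh_{r-|\sigma|}(\emptyset;A)\cong\thh_r(S^{|\sigma|-1};A)$ holds once one adopts the usual convention $\thh_{-1}(\emptyset;A)=A$ and $\thh_i(\emptyset;A)=0$ for $i\neq -1$. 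With these conventions the corollary follows immediately, all the real content residing in Theorem \ref{t:hty}.
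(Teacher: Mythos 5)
Your proposal is correct and is exactly the argument the paper has in mind: the paper offers no written proof, simply calling the corollary an immediate consequence of Theorem \ref{t:hty}, and the intended derivation is precisely your application of the wedge axiom and the iterated suspension isomorphism to the homotopy equivalence (\ref{e:hty}). Your care with the boundary conventions ($\sigma=\emptyset$ contributing $\thh_r(K;A)$, the $D(1)=0$ annihilation, and $\thh_{-1}(\emptyset;A)=A$ for facets) is consistent with the paper's convention $\Sigma\emptyset=S^0$ and adds nothing beyond what the paper implicitly assumes.
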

\noindent
Let $p_{\FF}(K,t)=\sum_{i \geq 0} \tilde{\beta}_{i-1}(K;\FF) t^i$ denote the reduced Poincar\'{e} polynomial of $K$ over a field $\FF$. Corollary \ref{c:homg} implies the following
\begin{corollary}
\label{c:poing}
If $K$ is connected then
\begin{equation}
\label{e:poing}
p_{\FF}\left(\Gamma(K),t\right)=\sum_{\sigma \in K} D(|\sigma|) t^{|\sigma|} p_{\FF}(\lk(K,\sigma),t).
\end{equation}
\end{corollary}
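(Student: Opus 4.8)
The plan is to obtain the identity directly from Corollary \ref{c:homg} by taking $\FF$-dimensions and repackaging the resulting numerical recursion as an identity of generating functions; there is essentially no topology left to do, only bookkeeping of reduced homology degrees. First I would record that, $\FF$ being a field, $\tilde{\beta}_r(X;\FF)=\dim_\FF \thh_r(X;\FF)$ and that $\dim_\FF$ is additive over finite direct sums. Applying $\dim_\FF$ to the isomorphism of Corollary \ref{c:homg} with $A=\FF$ then gives, for every integer $r$,
\[
\tilde{\beta}_r\big(\Gamma(K);\FF\big)=\sum_{\sigma\in K}D(|\sigma|)\,\tilde{\beta}_{r-|\sigma|}\big(\lk(K,\sigma);\FF\big),
\]
the sum being finite since $K$ is finite.

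Next I would multiply by $t^{r+1}$ and sum over $r\ge -1$, so as to match the convention $p_\FF(X,t)=\sum_{i\ge 0}\tilde{\beta}_{i-1}(X;\FF)t^i$. The left-hand side becomes $p_\FF(\Gamma(K),t)$ by definition. On the right-hand side I interchange the two (finite) sums; the block indexed by a fixed $\sigma\in K$ is
\begin{align*}
D(|\sigma|)\sum_{r\ge -1}\tilde{\beta}_{r-|\sigma|}\big(\lk(K,\sigma);\FF\big)\,t^{r+1}
&= D(|\sigma|)\,t^{|\sigma|}\sum_{j}\tilde{\beta}_{j}\big(\lk(K,\sigma);\FF\big)\,t^{j+1}\\
&= D(|\sigma|)\,t^{|\sigma|}\,p_\FF\big(\lk(K,\sigma),t\big),
\end{align*}
after the substitution $j=r-|\sigma|$ and the observation that $\tilde{\beta}_j=0$ for $j\le -2$, so that the $j$-sum effectively runs over $j\ge -1$ as in the definition of $p_\FF$. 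Summing over $\sigma\in K$ yields the claimed formula.

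The only delicate point — and the only thing that deserves to be called an obstacle here — is keeping the reduced homology conventions straight: the degree shift by one in the definition of $p_\FF$, and the fact that when $\sigma$ is a maximal face of $K$ the link $\lk(K,\sigma)$ is empty, so that $\tilde{\beta}_{-1}(\lk(K,\sigma);\FF)=1$ and hence $p_\FF(\lk(K,\sigma),t)=1$. With this pinned down the maximal-face terms contribute exactly $D(|\sigma|)t^{|\sigma|}$, matching the $\bigvee_{D(|\sigma|)}S^{|\sigma|-1}$ summands produced by Theorem \ref{t:hty}, and the term $\sigma=\emptyset$ simply reproduces $p_\FF(K,t)$ since $D(0)=1$ and $\lk(K,\emptyset)=K$. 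Once the indexing is fixed, the proof is a one-line reindexing with nothing deeper underneath.
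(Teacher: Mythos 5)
Your proposal is correct and is exactly the derivation the paper intends: the paper states Corollary \ref{c:poing} as an immediate consequence of Corollary \ref{c:homg}, and your argument — taking $\dim_\FF$ of that isomorphism with $A=\FF$ and reindexing the generating function, with the correct handling of the degree shift and of $\tilde{\beta}_{-1}$ for empty links — is the intended bookkeeping spelled out in full.
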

\noindent
We conclude this section with a simple application of Theorem \ref{t:hty}.
\begin{definition}
A simplicial complex $K$ is \emph{weakly Cohen-Macaulay} of dimension $n$ if
$\lk(K,\sigma)$ is $(n-\dim \sigma -2)$-connected for all $\sigma \in K$.
\end{definition}
\noindent
Theorem \ref{t:hty} provides a very simple proof of the following result of
Randal-Williams and Wahl (Proposition 2.14 in \cite{RW-W17}).
\begin{proposition}[\cite{RW-W17}]
\label{p:rww}
If $K$ is weakly Cohen-Macaulay of dimension $n$, then $\Gamma(K)$ is $(n-1)$-connected.
\end{proposition}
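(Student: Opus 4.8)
The plan is to deduce Proposition \ref{p:rww} directly from the homotopy decomposition in Theorem \ref{t:hty}. Recall that for a connected complex $K$ we have the homotopy equivalence $\Gamma(K) \simeq \bigvee_{\sigma \in K} \bigvee_{D(|\sigma|)} \Sigma^{|\sigma|} \lk(K,\sigma)$. A wedge of spaces is $(n-1)$-connected precisely when each wedge summand is $(n-1)$-connected, so it suffices to show that $\Sigma^{|\sigma|}\lk(K,\sigma)$ is $(n-1)$-connected for every nonempty $\sigma \in K$. One small point to attend to first: Theorem \ref{t:hty} is stated for connected $K$, and a weakly Cohen-Macaulay complex of dimension $n$ is automatically connected as soon as $n \geq 1$ (the link of the empty simplex, namely $K$ itself, is $(n-2)$-connected, hence connected when $n \geq 1$), while the case $n \leq 0$ is trivial since every nonempty complex is $(-1)$-connected; so the hypothesis of Theorem \ref{t:hty} is met in the only case that needs an argument.

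Next I would carry out the connectivity bookkeeping. Write $|\sigma| = \dim\sigma + 1 =: j \geq 1$. By the weak Cohen-Macaulay hypothesis, $\lk(K,\sigma)$ is $(n - \dim\sigma - 2)$-connected, i.e.\ $(n - j - 1)$-connected. Suspending raises connectivity by one: if a space $Z$ is $c$-connected then $\Sigma Z$ is $(c+1)$-connected. Iterating $j$ times, $\Sigma^{j}\lk(K,\sigma)$ is $\big((n-j-1)+j\big)$-connected $= (n-1)$-connected. This bound is uniform in $\sigma$, so every summand of the wedge is $(n-1)$-connected, and therefore so is $\Gamma(K)$.

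There is essentially no obstacle here: the argument is a one-line consequence of Theorem \ref{t:hty} together with the elementary fact that suspension shifts connectivity by one and that a wedge is as connected as its least-connected summand. The only thing to be slightly careful about is the indexing convention — making sure that ``$(n-\dim\sigma-2)$-connected'' for the link combined with $\dim\sigma+1$ suspensions lands exactly on $(n-1)$-connected — and the degenerate edge cases ($K$ empty, or $n \leq 0$) where the statement is vacuous or immediate. I would also remark that the contrast with \cite{RW-W17}, where the result is proved via a more hands-on spectral-sequence or induction argument, is precisely that the explicit wedge decomposition trivializes the connectivity estimate; it is worth stating this as the point of the corollary.

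\begin{proof}[Proof of Proposition \ref{p:rww}]
If $n \leq 0$ the statement is vacuous, since any nonempty space is $(-1)$-connected and the empty space is $(-2)$-connected by convention; we may thus assume $n \geq 1$. Taking $\sigma = \emptyset$ in the weak Cohen-Macaulay condition shows that $\lk(K,\emptyset) = K$ is $(n-2)$-connected, hence connected, so Theorem \ref{t:hty} applies and gives
\begin{equation*}
\Gamma(K) \simeq \bigvee_{\sigma \in K} \bigvee_{D(|\sigma|)} \Sigma^{|\sigma|}\lk(K,\sigma).
\end{equation*}
A wedge of pointed spaces is $(n-1)$-connected if and only if each wedge summand is, so it suffices to bound the connectivity of a single summand $\Sigma^{|\sigma|}\lk(K,\sigma)$ for $\emptyset \neq \sigma \in K$. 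Set $j = |\sigma| = \dim\sigma + 1 \geq 1$. By hypothesis $\lk(K,\sigma)$ is $(n - \dim\sigma - 2)$-connected, that is, $(n - j - 1)$-connected. Since suspension increases connectivity by one, $\Sigma^{j}\lk(K,\sigma)$ is $\big((n - j - 1) + j\big)$-connected, i.e.\ $(n-1)$-connected. As this holds for every nonempty $\sigma \in K$, every summand of the wedge above is $(n-1)$-connected, and therefore $\Gamma(K)$ is $(n-1)$-connected.
\end{proof}
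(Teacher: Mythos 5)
Your proof is correct and follows essentially the same route as the paper: apply the wedge decomposition of Theorem \ref{t:hty}, observe that the weak Cohen--Macaulay hypothesis makes each summand $\Sigma^{|\sigma|}\lk(K,\sigma)$ exactly $(n-1)$-connected since suspension raises connectivity by one, and conclude. Your additional remarks on why $K$ is connected (so that Theorem \ref{t:hty} applies) and on the degenerate cases $n\le 0$ are sensible refinements the paper leaves implicit, but they do not change the argument.
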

\begin{proof}
Let $\sigma \in K$. By assumption $\lk(K,\sigma)$ is $(n-|\sigma|-1)$-connected,
and hence $\Sigma^{|\sigma|}\lk(K,\sigma)$ is $(n-1)$-connected. As this holds for all $\sigma \in K$, it follows from (\ref{e:hty}) that $\Gamma(K)$ is $(n-1)$-connected.
\end{proof}

\section{Random Permutation Complexes}
\label{s:random}
\noindent
{\it Proof of Proposition \ref{p:echar}.}
(a) Let $w=i_1 \cdots i_{k}$ be a fixed injective word of length $k$ on $[n]$.
The probability that $w$ is a subword of a random permutation $\sigma \in S_n$
is clearly $1/k!$. Hence
\begin{equation}
\label{e:prw}
\pr \left[ w \in W(\sigma_1,\ldots,\sigma_m)\right]=1-\left(1-\frac{1}{k!}\right)^m.
\end{equation}
Let
$a_{k-1}(\sigma_1,\ldots,\sigma_m)=\rk\, A_{k-1}(\sigma_1,\ldots,\sigma_m)$ denote the number of words of length $k$ in $W(\sigma_1,\ldots,\sigma_m)$. Theorem \ref{t:acom} implies that
\begin{equation}
\label{e:eulerc}
\tilde{\chi}\big(X(\sigma_1,\ldots,\sigma_m)\big)=-1+\sum_{k=1}^{n} (-1)^{k-1} a_{k-1}(\sigma_1,\ldots,\sigma_m).
\end{equation}
Using (\ref{e:eulerc})
and (\ref{e:prw}) we obtain
$$
E[\tilde{\chi}(X_{m,n})]=-1+\sum_{k=1}^{n} (-1)^{k-1} E[a_{k-1}] =
 \sum_{k=0}^n (-1)^{k-1} \left(1-\left(1-\frac{1}{k!}\right)^m\right)\binom{n}{k}k!.
$$
\noindent
(b) The $n$-th Laguerre polynomial
is $L_n(x)=\sum_{k=0}^n \frac{(-1)^k}{k!} \binom{n}{k} x^k$.
By the asymptotic formula for Laguerre polynomials (see e.g. Theorem 7.6.4 in \cite{Szego67}) we obtain
$$
\left|E[\tilde{\chi}(X_{2,n})]\right|= \left|\sum_{k=0}^n \frac{(-1)^{k}}{k!} \binom{n}{k}\right|=\left|L_n(1)\right|=O(n^{-\frac{1}{4}}).
$$
{\enp}
\noindent
{\bf Remarks:}
\\
1. Let $n$ be fixed and $m \rightarrow \infty$.
Then
$$
\lim_{m \rightarrow \infty} \pr[X_{m,n}=\Gamma(\dn)]=1.
$$
\noindent
It follows that
$$\lim_{m \rightarrow \infty} E[\tilde{\chi}(X_{n,m})] =
\tilde{\chi}(\Gamma(\dn))=(-1)^{n-1}D(n).$$
This of course agrees with (\ref{e:echar}) as indeed
\begin{equation}
\label{e:limit}
\begin{split}
\lim_{m \rightarrow \infty}  E[\tilde{\chi}(X_{n,m})] &=
\lim_{m \rightarrow \infty}
\sum_{k=0}^n (-1)^{k-1} \left(1-(1-\frac{1}{k!})^m\right)\binom{n}{k}k! \\
&=\sum_{k=0}^n (-1)^{k-1} \binom{n}{k} k!=(-1)^{n-1} D(n).
\end{split}
\end{equation}
\\
2. For $\sigma \in \ccs_n$ let $\lambda_n(\sigma)$ denote the maximal length of an increasing subsequence in $\sigma$. The random variable $\lambda_n$ has been studied in depth for the last 50 years, see Aldous and Diaconis \cite{AD99} for a detailed survey. In particular, the asymptotics of $E[\lambda_n]$ is given by the following celebrated result of Logan and Shepp \cite{LS77} and Kerov and Vershik \cite{VK77}.
\begin{theorem}[\cite{LS77,VK77}]
\label{t:lskv}
$E[\lambda_n] \sim 2 \sqrt{n}$ as $n \rightarrow \infty$.
\end{theorem}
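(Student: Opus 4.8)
The plan is to go through the Robinson--Schensted--Knuth correspondence and the asymptotics of Plancherel measure. Recall Schensted's theorem: RSK is a bijection between $\ccs_n$ and pairs $(P,Q)$ of standard Young tableaux of a common shape $\lambda \vdash n$, under which $\lambda_n(\sigma)$ is the length $\lambda_1$ of the first row of the shape of $\sigma$. Hence, if $\sigma \in \ccs_n$ is uniform, its RSK shape is distributed according to Plancherel measure $\Pr[\lambda] = (f^\lambda)^2/n!$ (where $f^\lambda$ is the number of standard Young tableaux of shape $\lambda$), and $\lambda_n$ has the same law as $\lambda_1$ under this measure. The statement to prove then becomes: a Plancherel--random partition of $n$ has first part $(2+o(1))\sqrt n$ in expectation.

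First I would prove the in-probability version, $\lambda_n/\sqrt n \to 2$. Rescaling a diagram $\lambda\vdash n$ by $1/\sqrt n$ along both axes and combining Stirling's formula with the hook--length formula $f^\lambda = n!/\prod_{c \in \lambda} h(c)$ shows that $\log\bigl((f^\lambda)^2/n!\bigr) = -(1+o(1))\sqrt n\, I(\omega_\lambda)$, where $\omega_\lambda$ is the rescaled profile and $I$ is a strictly convex ``energy'' functional on decreasing profiles of unit area. The functional $I$ has a unique minimiser, the Vershik--Kerov--Logan--Shepp curve $\Omega$, which in the rotated (``Russian'') coordinates and the usual normalisation is $\Omega(u) = \tfrac2\pi\bigl(u\arcsin\tfrac u2 + \sqrt{4-u^2}\,\bigr)$ for $|u|\le 2$ and $\Omega(u) = |u|$ for $|u| \ge 2$. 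The point is that $\Omega$ leaves the diagonal exactly at $u = \pm 2$, which is precisely the assertion that the first row is $(2+o(1))\sqrt n$. Concretely, the upper bound $\Pr[\lambda_1 \ge (2+\varepsilon)\sqrt n] \to 0$ follows by estimating, via the hook product, the total Plancherel mass of diagrams with a long first row; the lower bound $\Pr[\lambda_1 \le (2-\varepsilon)\sqrt n] \to 0$ uses $\sum_{\lambda\vdash n}(f^\lambda)^2 = n!$ together with the subexponential estimate $p(n) = e^{O(\sqrt n)}$ for the number of partitions of $n$: if every diagram with $\lambda_1 \le (2-\varepsilon)\sqrt n$ had Plancherel mass at most $e^{-c(\varepsilon)\sqrt n}$ with $c(\varepsilon)$ strictly exceeding the partition-counting exponent, the total mass of such diagrams would tend to $0$, a contradiction.

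The main obstacle is exactly this sharp-constant input: that the minimiser of $I$ touches the axis at $\pm 2$, and that constraining $\lambda_1 \le (2-\varepsilon)\sqrt n$ costs a penalty $c(\varepsilon)$ that genuinely beats the entropy $O(\sqrt n)$ of the partition count. Establishing it amounts to solving a singular integral equation (of Hilbert-transform type) for the equilibrium shape and carefully controlling the error terms in the Stirling/hook-length asymptotics; this is the analytic core of both the Logan--Shepp and the Vershik--Kerov treatments, and everything around it is comparatively soft.

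Finally I would upgrade convergence in probability to convergence of expectations by checking uniform integrability of $\lambda_n/\sqrt n$. An elementary union bound --- count permutations containing a prescribed increasing subsequence of length $\ell$ at prescribed positions with prescribed values --- gives
\[
\Pr[\lambda_n \ge \ell] \;\le\; \frac{1}{n!}\binom{n}{\ell}^2 (n-\ell)! \;=\; \frac{n!}{(\ell!)^2(n-\ell)!} \;\le\; \frac{n^\ell}{(\ell!)^2},
\]
whence, by Stirling, $\Pr[\lambda_n \ge a\sqrt n] \le (e^2/a^2)^{a\sqrt n}$, which is super-exponentially small once $a > e$. Since $\{\lambda_n/\sqrt n > M\}$ is empty for $n \le M^2$, summation by parts gives $\sup_n E\bigl[(\lambda_n/\sqrt n)\,\mathbf{1}_{\lambda_n > M\sqrt n}\bigr] \to 0$ as $M\to\infty$, so $E[\lambda_n]/\sqrt n \to 2$. (Alternatively, one could first obtain the existence of $\lim_n E[\lambda_n]/\sqrt n$ from Fekete's subadditive lemma applied to a Poissonised version of $\lambda_n$, in the manner of Hammersley, and only then identify the limit as $2$; or appeal to the hydrodynamic-limit proof of Aldous and Diaconis. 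I would take the RSK/variational route above as the primary argument.)
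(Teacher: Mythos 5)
This theorem is quoted in the paper as a classical result with the citations \cite{LS77,VK77}; the paper supplies no proof of it, so there is no internal argument to measure yours against. Your outline follows the classical route (Schensted/RSK, Plancherel measure, the hook-length variational analysis, then an upgrade from convergence in probability to convergence of means), and the parts you actually execute are sound: the reduction of $\lambda_n$ to the first row of a Plancherel-random shape is correct, and the uniform-integrability step is complete --- your bound $\Pr[\lambda_n\ge\ell]\le\binom{n}{\ell}/\ell!\le n^{\ell}/(\ell!)^2$ is exactly the standard first-moment estimate, it is super-exponentially small for $\ell> e\sqrt n$, and together with $\lambda_n\le n$ it legitimately converts the in-probability statement into $E[\lambda_n]\sim 2\sqrt n$.

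Two caveats. First, the central estimate is misstated: Stirling plus the hook-length formula gives $\log\bigl((f^\lambda)^2/n!\bigr)=-(1+o(1))\,n\,I(\omega_\lambda)$, i.e.\ a large-deviation principle at speed $n$, not $\sqrt n$ (the $n\log n$ terms cancel between $\log n!$ and $2\sum_{c}\log h(c)$, leaving a remainder of order $n$). With the correct speed, the lower-tail bound $\Pr[\lambda_1\le(2-\varepsilon)\sqrt n]\to0$ is not the delicate fight against the partition entropy $p(n)=e^{O(\sqrt n)}$ that you describe --- any positive rate at speed $n$ swamps $e^{O(\sqrt n)}$ trivially. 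The genuine competition at scale $\sqrt n$ occurs in the \emph{upper} tail $\Pr[\lambda_1\ge(2+\varepsilon)\sqrt n]$, where the naive first-moment bound only reaches the constant $e$ (as your own computation shows) and a separate hook-product argument is required to bring it down to $2$. Second, the analytic core --- identifying the Vershik--Kerov--Logan--Shepp curve as the unique minimiser and extracting quantitative concentration around it --- is named but not carried out; since that is the entire content of the cited theorem, your proposal is an accurate roadmap of the literature rather than a self-contained proof. For the purposes of this paper, which only invokes the statement, citing \cite{LS77,VK77} as the authors do is the appropriate resolution.
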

\noindent
Clearly
$$\dim \left( X(\sigma_1) \cap  X(\sigma_2) \right) = \lambda_n(\sigma_2^{-1} \sigma_1)-1,$$
and therefore
$$
\hd\left(X(\sigma_1,\sigma_2)\right) \leq \hd\left(X(\sigma_1) \cap X(\sigma_2)\right)+1 \leq \lambda_n(\sigma_2^{-1} \sigma_1).
$$
Hence
\[
\limsup_{n \rightarrow \infty} E[\hd(X_{2,n})/\sqrt{n}] \leq 2.
\]
A somewhat better upper bound is given in the following
\begin{proposition}
\label{p:lims}
\[
\limsup_{n \rightarrow \infty} E\left[\hd(X_{2,n})/\sqrt{n}\right] \leq \frac{e}{2}.
\]
\end{proposition}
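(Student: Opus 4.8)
The plan is to bound $\hd(X_{2,n})$ above by the combinatorial statistic $\alpha$ of a uniformly random permutation, using Theorem \ref{thm:hom}(c), and then to estimate the expectation of that statistic by a first-moment argument. First I would assemble the topological inequalities. By (\ref{e:unioni}) we have $X(\sigma_1,\sigma_2)\simeq\Sigma\bigl(X(\sigma_1)\cap X(\sigma_2)\bigr)$, so the suspension isomorphism in reduced homology gives $\hd\bigl(X(\sigma_1,\sigma_2)\bigr)\le\hd\bigl(X(\sigma_1)\cap X(\sigma_2)\bigr)+1$. By Lemma \ref{c:xsd}, $X(\sigma_1)\cap X(\sigma_2)\cong\sd\Delta\bigl(Q(\sigma_1,\sigma_2)\bigr)$, and relabelling the ground set $[n]$ of $Q(\sigma_1,\sigma_2)$ by $\sigma_1^{-1}$ identifies $\Delta\bigl(Q(\sigma_1,\sigma_2)\bigr)$ with $Y([n],\psi)$, where $\psi=\sigma_2^{-1}\sigma_1$ and $[n]$ carries its natural order (consistently with the identity $\dim(X(\sigma_1)\cap X(\sigma_2))=\lambda_n(\sigma_2^{-1}\sigma_1)-1$). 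Hence $X(\sigma_1)\cap X(\sigma_2)\simeq Y([n],\psi)$, and by Theorem \ref{thm:hom}(c),
\begin{equation*}
\hd\bigl(X(\sigma_1,\sigma_2)\bigr)\;\le\;\hd\bigl(Y([n],\psi)\bigr)+1\;\le\;\alpha\bigl([n],\psi\bigr).
\end{equation*}
Since $\sigma_1,\sigma_2$ are independent and uniform, $\psi$ is uniform in $\ccs_n$, so it suffices to prove $\limsup_{n\to\infty}E\bigl[\alpha([n],\pi)\bigr]/\sqrt n\le e/2$ for $\pi$ uniform in $\ccs_n$.

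The second step is to recognise $\alpha([n],\pi)$ as a pattern-occurrence statistic. Unwinding Definition \ref{d:alternating} with $R=[n]$, $\prec\,=\,<$, and $\phi=\pi$, a $k$-alternating sequence is a choice of positions $r_1<\cdots<r_{2k}$ whose values satisfy the single chain of inequalities
\begin{equation*}
\pi(r_2)<\pi(r_1)<\pi(r_4)<\pi(r_3)<\pi(r_6)<\pi(r_5)<\cdots<\pi(r_{2k})<\pi(r_{2k-1}),
\end{equation*}
i.e. the subword $\pi(r_1)\cdots\pi(r_{2k})$ realises the single permutation pattern $2\,1\,4\,3\,\cdots\,(2k)\,(2k-1)$. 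Therefore a fixed set of $2k$ positions supports a $k$-alternating sequence with probability exactly $1/(2k)!$, and by linearity of expectation together with Markov's inequality,
\begin{equation*}
\pr\bigl[\alpha([n],\pi)\ge k\bigr]\;\le\;\binom{n}{2k}\frac{1}{(2k)!}\;\le\;\frac{n^{2k}}{\bigl((2k)!\bigr)^{2}}\;\le\;\Bigl(\tfrac{e\sqrt n}{2k}\Bigr)^{4k},
\end{equation*}
where the last inequality uses $(2k)!\ge(2k/e)^{2k}$.

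Finally I would convert this tail bound into the asymptotics of the mean. Fix $\varepsilon>0$ and put $c=\tfrac e2+\varepsilon$. For every $k\ge c\sqrt n$ one has $\tfrac{e\sqrt n}{2k}\le\tfrac{e}{2c}=:\rho<1$, so $\pr[\alpha([n],\pi)\ge k]\le\rho^{4k}$, and
\begin{equation*}
E\bigl[\alpha([n],\pi)\bigr]=\sum_{k\ge1}\pr\bigl[\alpha([n],\pi)\ge k\bigr]\;\le\;\bigl(c\sqrt n+1\bigr)+\sum_{k\ge1}\rho^{4k}\;=\;c\sqrt n+O_{\varepsilon}(1).
\end{equation*}
Dividing by $\sqrt n$ and letting $n\to\infty$ gives $\limsup_n E[\alpha([n],\pi)]/\sqrt n\le\tfrac e2+\varepsilon$, and since $\varepsilon>0$ is arbitrary the bound is $e/2$; combined with the first step this proves the proposition. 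The one point requiring genuine care is the bookkeeping in the first step — tracking the inverses and the relabelling so that $\psi$ ends up uniformly distributed, and verifying that the alternating condition collapses to exactly one pattern — while the rest is a routine union bound and Stirling's inequality, needing no sharp longest-increasing-subsequence asymptotics (the improvement over the bound $2$ noted above comes precisely from $\alpha$ being governed by a length-$2k$ pattern rather than by $\lambda_n$).
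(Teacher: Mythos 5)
Your proposal is correct and follows essentially the same route as the paper: reduce via Theorem \ref{thm:hom}(c) to the statistic $\alpha_n(\sigma_2^{-1}\sigma_1)$ of a uniform permutation, bound $\pr[\alpha_n\ge k]$ by the first moment $\binom{n}{2k}\frac{1}{(2k)!}$ of the number of $k$-alternating sequences (each being a single length-$2k$ pattern), and apply Stirling with $k\approx c\sqrt n$, $c>e/2$. The only cosmetic differences are that you spell out the relabelling showing $X(\sigma_1)\cap X(\sigma_2)\simeq Y([n],\sigma_2^{-1}\sigma_1)$, which the paper asserts, and that you pass from the tail bound to the expectation via the tail-sum formula rather than the paper's two-term split $E[\alpha_n]\le k\,\pr[\alpha_n\le k]+\tfrac n2\,\pr[\alpha_n\ge k]$.
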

\begin{proof}
Let $\alpha_n$ be the random variable on the symmetric group $\ccs_n$ given by
$\alpha_n(\sigma)=\alpha([n],\sigma)$, i.e. the maximal $k$ such that $\sigma$ contains a $k$-alternating sequence.
Fix $c>e/2$ and let $k=c\sqrt{n}$.
Let $\gamma_{n,k}(\sigma)$ denote the number of $k$-alternating sequences in $\sigma$.
Then by Stirling's formula
\begin{equation*}
\label{e:ehalfa}
\pr[\alpha_n \geq k] \leq E[\gamma_{n,k}]
= \binom{n}{2k}\frac{1}{(2k)!} \leq \frac{1}{\sqrt{n}}\left(\frac{e}{2c}\right)^{4k}.
\end{equation*}
Theorem \ref{thm:hom}(c) implies that
$\hd\left(X(\sigma_1,\sigma_2)\right) \leq \alpha_n(\sigma_2^{-1} \sigma_1)$.
Hence
\begin{equation*}
\label{e:ehalfb}
\begin{split}
\frac{E\left[\hd(X_{2,n})\right]}{\sqrt{n}} &\leq \frac{E\left[\alpha_n\right]}{\sqrt{n}}
\leq \frac{1}{\sqrt{n}}
\left(k\cdot \pr[\alpha_n \leq k]+\frac{n}{2}\cdot\pr[\alpha_n \geq k]\right) \\
&\leq \frac{k}{\sqrt{n}}+\frac{1}{2} \left(\frac{e}{2c}\right)^{4c\sqrt{n}} \rightarrow c.
\end{split}
\end{equation*}
\end{proof}
\noindent
We end this section with recursive formulas that can be used for numerical computation of the expectation of the betti numbers of $X_{2,n}$.

\begin{definition}\label{Def:bnk_unk_vnrk}
Let $b_{n,k}$ be the random variable on the probability space $\ccs_n$
given by $b_{n,k}(\pi)=\tilb_k(Y([n],\pi))$.
Let $u(n,k)=E[b_{n,k}]$ and for $1 \leq r \leq n$ let
$v(n,r,k)=E[b_{n,k}|\pi^{-1}(1)=r]$.
\end{definition}
\noindent
Clearly $u(n,k)=\frac{1}{n} \sum_{r=1}^n v(n,r,k)$.
The proof of Theorem \ref{thm:hom} implies the following recursion for $v(n,r,k)$.
\begin{proposition}
\label{p:expb}
$~$
\begin{itemize}
\item[(a)]
$v(1,1,0)=0$.
\item[(b)]
For $1<n$ and $k \geq 0$:
\begin{equation}
\label{e:nr}
v(n,n,k)=\delta_{k,0}+\frac{1}{n-1} \sum_{r=1}^{n-1} v(n-1,r,k).
\end{equation}
\item[(c)]
For $1 \leq r<n$ and $k \geq 0$:
\begin{equation}
\label{e:rln}
\begin{split}
v(n,r,k)&=\frac{n-r}{n-1} v(n-1,r,k)+\frac{1}{n-1}\sum_{j=1}^{r-1} v(n-1,j,k) \\
&+\frac{(r-1)(1-\delta_{k,0})}{(n-1)(n-r)} \sum_{i=1}^{n-r}v(n-r,i,k-1).
\end{split}
\end{equation}
\end{itemize}
\end{proposition}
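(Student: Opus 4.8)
\emph{Proof plan.} Write $Y(\pi)\defeq Y([n],\pi)$, with $[n]$ in its natural order and $\phi=\pi$ regarded as an injective function to $\Rea$ as in Definition~\ref{2-perm}. The plan is to run a single step of the inductive argument that proves Theorem~\ref{thm:hom}, while recording how the conditional law of $\pi$ transforms. In that proof, $r_1$ is the position of the smallest value of $\phi$ and $r_2$ that of the second smallest; conditioning on $\pi^{-1}(1)=r$ is exactly the statement $r_1=r$, and then $s\defeq\pi^{-1}(2)=r_2$ is, by symmetry of the uniform measure on $\ccs_n$, uniform over $[n]\setminus\{r\}$, while conditionally on $r_1=r$ and $r_2=s$ the restriction of $\pi$ to $[n]\setminus\{r,s\}$ is a uniformly random bijection onto $[n]\setminus\{1,2\}$. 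Case~(i) of that proof ($r_1\prec r_2$) happens precisely when $s>r$, i.e.\ for the $n-r$ values $s\in\{r+1,\dots,n\}$; case~(ii) ($r_2\prec r_1$) happens precisely when $s<r$, i.e.\ for $s\in\{1,\dots,r-1\}$.

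Next I would carry out the two cases, reusing the homotopy equivalences already established in the proof of Theorem~\ref{thm:hom}. In case~(i) that proof gives $Y(\pi)\simeq Y([n]\setminus\{s\},\phi')$; relabelling the $n-1$ surviving positions and values in an order-preserving way identifies this with $Y([n-1],\pi')$, and since $r<s$ the smallest value still occupies position $r$, so $(\pi')^{-1}(1)=r$, and by the ``uniform bijection'' remark above $\pi'$ is conditionally uniform among permutations of $[n-1]$ with this property. In case~(ii), (\ref{e:case2}) gives $Y(\pi)\simeq Y([n]\setminus\{r\},\phi')\vee\Sigma Y(R'',\phi'')$ with $R''=\{r+1,\dots,n\}$; the same relabelling identifies $Y([n]\setminus\{r\},\phi')$ with $Y([n-1],\pi'')$, where the old value $2$ becomes the new minimum, still located at position $s$, so $(\pi'')^{-1}(1)=s$ and $\pi''$ is conditionally uniform with this property, and it identifies $Y(R'',\phi'')$ with $Y([n-r],\rho)$, where $\rho\in\ccs_{n-r}$ is the order pattern of $\pi$ on the fixed position set $\{r+1,\dots,n\}$ (note $1,2\notin\pi(\{r+1,\dots,n\})$) and is therefore conditionally uniform in $\ccs_{n-r}$.

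To finish, I would bookkeep with Betti numbers. Since $\tilb_\bullet$ is additive over wedges and $\tilb_k(\Sigma Z)=\tilb_{k-1}(Z)$---the latter contributing nothing when $k=0$ because $Y([n-r],\rho)$ is nonempty for $r<n$---it follows that $E[b_{n,k}\mid \pi^{-1}(1)=r,\ \pi^{-1}(2)=s]$ equals $v(n-1,r,k)$ when $s>r$, and equals $v(n-1,s,k)+(1-\delta_{k,0})\,u(n-r,k-1)$ when $s<r$, with $u(n-r,k-1)=\tfrac{1}{n-r}\sum_{i=1}^{n-r}v(n-r,i,k-1)$. Averaging over the $n-1$ equally likely values of $s$, of which $n-r$ fall in case~(i) and $r-1$ (namely $1,\dots,r-1$) in case~(ii), yields the recursion~(\ref{e:rln}) of part~(c). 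Part~(b) is the degenerate case $r=n$, where one is always in case~(ii) but now the set $R''$ is empty; since $\Sigma\emptyset=S^0$ and $\tilb_k(S^0)=\delta_{k,0}$, (\ref{e:case2}) gives $b_{n,k}(\pi)=\delta_{k,0}+b_{n-1,k}(\pi'')$ with $\pi''$ conditionally uniform among permutations of $[n-1]=[n]\setminus\{n\}$ whose minimum lies at the uniform position $s\in\{1,\dots,n-1\}$, whence $v(n,n,k)=\delta_{k,0}+\tfrac{1}{n-1}\sum_{s=1}^{n-1}v(n-1,s,k)$, which is~(\ref{e:nr}). Part~(a) is immediate, since $Y([1],\pi)$ is a single point and $\tilb_0$ of a point is $0$.

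I expect the main obstacle to be the careful order-preserving relabelling bookkeeping of the second paragraph---checking that the minimum really lands at position $r$ (case~(i)) or $s$ (case~(ii)) and that $\pi'$, $\pi''$, $\rho$ carry exactly the claimed uniform conditional distributions---together with the small edge-case care at $k=0$, where one must keep ``the suspension of a nonempty complex is connected'' (which produces the factor $1-\delta_{k,0}$ in (c)) separate from ``$\Sigma\emptyset=S^0$'' (which produces the $\delta_{k,0}$ in (b)).
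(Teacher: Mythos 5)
Your proposal is correct and follows essentially the same route as the paper: condition on the positions $r=\pi^{-1}(1)$ and $s=\pi^{-1}(2)$, apply the case analysis (\ref{e:case1})/(\ref{e:case2}) from the proof of Theorem \ref{thm:hom}, identify the resulting conditional distributions with $v(n-1,\cdot,k)$ and $u(n-r,k-1)$, and average over the $n-1$ equally likely values of $s$. The only cosmetic difference is that the paper handles part (b) directly as a disjoint union with an isolated point rather than as the degenerate $R''=\emptyset$ instance of case (ii); your version, including the explicit relabelling and conditional-uniformity checks, is if anything slightly more careful than the paper's.
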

\begin{proof}
Part (a) is clear. To prove part (b) note that if $\pi^{-1}(1)=n$ then
$Y([n],\pi)$ is the disjoint union of $Y([n-1],\pi_{|[n-1]})$ and the point $\{n\}$,
where $\pi_{|[n-1]}$ denotes the restriction of $\pi$ to $[n-1]$.
It follows that
\begin{equation}
\label{e:vnrkb}
\begin{split}
v(n,n,k)&=E[b_{n,k}|\pi^{-1}(1)=n]=\delta_{k,0} +E[b_{n-1,k}] \\
&=\delta_{k,0}+\frac{1}{n-1} \sum_{r=1}^{n-1} v(n-1,r,k).
\end{split}
\end{equation}
\noindent
For part (c) fix $j \neq r <n$ and consider two cases.

\noindent {\bf (i)}: $j>r$. If $\pi \in \ccs_n$ satisfies
$\pi^{-1}(1)=r$, $\pi^{-1}(2)=j$, then by (\ref{e:case1}):
\begin{equation*}
\label{e:case1a}
\tilde{\beta}_k\left(Y([n],\pi)\right)=
\tilde{\beta}_k\left(Y([n]\setminus \{j\},\pi_{|[n]\setminus \{j\}})\right),
\end{equation*}
and hence
\begin{equation}
\label{e:case1aa}
E[b_{n,k}|\pi^{-1}(2)=j ~\wedge~ \pi^{-1}(1)=r]=v(n-1,r,k).
\end{equation}

\noindent {\bf (ii)}: $j<r$. If $\pi \in \ccs_n$ satisfies
$\pi^{-1}(1)=r$, $\pi^{-1}(2)=j$, then by (\ref{e:case2}):
\begin{equation*}
\label{e:case2a}
\tilde{\beta}_k\left(Y([n],\pi)\right)=
\tilde{\beta}_k\left(Y([n]\setminus\{r\},\pi_{|[n]\setminus\{r\}})\right)
+(1-\delta_{k,0})\tilde{\beta}_{k-1}\left(Y([n]\setminus [r],\pi_{|[n]\setminus [r]})\right),
\end{equation*}
and hence
\begin{equation}
\label{e:case2aa}
E[b_{n,k}|\pi^{-1}(2)=j ~\wedge~ \pi^{-1}(1)=r]=
v(n-1,j,k)+
\frac{1-\delta_{k,0}}{n-r} \sum_{i=1}^{n-r}v(n-r,i,k-1).
\end{equation}
Averaging $E[b_{n,k}|\pi^{-1}(2)=j ~\wedge~ \pi^{-1}(1)=r]$ over all $j \neq r$,
using (\ref{e:case1aa}) and (\ref{e:case2aa}), we obtain (\ref{e:rln}).
\end{proof}

\section{Concluding Remarks}
\label{s:con}
In this paper we studied some combinatorial and topological aspects of complexes of injective words. Our main results concern the realizability of stable homotopy types by permutation complexes and the existence of explicit homotopy decompositions of the complex of injective words associated with a simplicial complex. Our work suggests some questions regarding complexes of injective words.
\begin{itemize}
\item
In Theorem \ref{t:uniform} we showed that every stable homotopy type can be realized
by a permutation complex. It is natural to ask whether iterated suspensions are essential for such realization. For example, is the real projective plane homotopy equivalent to some $X(\sigma_1,\ldots,\sigma_m)$?
\item
In Proposition \ref{p:lims} we proved that $E\left[\hd(X_{2,n})/\sqrt{n}\right] \leq \frac{e}{2}$.
The constant $\frac{e}{2}$ can be slightly improved, but
in view of Theorem \ref{t:lskv} and of some numerical evidence, we suggest the following
\begin{conjecture}
\label{c:hdim}
There exists a constant $0<\gamma<1$ such that $E[\hd(X_{2,n})] \sim \gamma \sqrt{n}$
as $n \rightarrow \infty$.
\end{conjecture}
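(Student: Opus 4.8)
The first step is to move the whole question inside one uniform permutation. By~(\ref{e:unioni}) and Lemma~\ref{c:xsd}, $X(\sigma_1,\sigma_2)\simeq\Sigma\bigl(X(\sigma_1)\cap X(\sigma_2)\bigr)$, and relabelling the letters by $\sigma_1^{-1}$ identifies $X(\sigma_1)\cap X(\sigma_2)$ with $Y([n],\pi)$ for $\pi=\sigma_1^{-1}\sigma_2$; this $\pi$ is uniform in $\ccs_n$ when $\sigma_1,\sigma_2$ are independent and uniform. By Theorem~\ref{thm:hom}(a), $Y([n],\pi)$ is either contractible or a wedge of spheres, so $\hd(X_{2,n})$ and $\hd(Y([n],\pi))$ differ by at most one; hence the conjecture is equivalent to the assertion that $E[\hd(Y([n],\pi))]\sim\gamma\sqrt n$ for some $0<\gamma<1$, and from now on I would work only with the random complex $Y([n],\pi)$.

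\textbf{The upper bound.}
Theorem~\ref{thm:hom}(c) gives $\hd(Y([n],\pi))\le\alpha_n(\pi)-1$, and the first-moment bound in the proof of Proposition~\ref{p:lims} — namely $E[\gamma_{n,k}]=\binom{n}{2k}/(2k)!$, the expected number of occurrences of the unique alternating pattern $2\,1\,4\,3\cdots$ — yields $E[\hd(Y([n],\pi))]\le(e/2+o(1))\sqrt n$. Since $e/2>1$, this is not yet enough to see that $\gamma<1$; improving it is one of the things to be done, and I would try to do so either by a sharper estimate of the number of occurrences of the pattern $2\,1\,4\,3\cdots$ (a transfer-matrix or RSK-type count, since these occurrences overlap heavily) or, better, by bounding $\hd(Y([n],\pi))$ more tightly than $\alpha_n(\pi)$ directly.

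\textbf{The lower bound and the value of $\gamma$.}
For a matching lower bound of order $\sqrt n$ I would first exhibit, with high probability, a scale-$\sqrt n$ ``direct sum'' inside $\pi$ --- $\Theta(\sqrt n)$ disjoint blocks of consecutive positions, each carrying a disjoint range of values and containing an inversion, so that $Y([n],\pi)$ contains the join of $\Theta(\sqrt n)$ nontrivial pieces and $\hd$ is at least their number (cf.\ case~(2) in the proof of Theorem~\ref{thm:hom}(b), where joining adds $\hd$'s); the existence of such a configuration is a second-moment computation. To then pin down $\gamma$ exactly I would analyse the recursion behind the proof of Theorem~\ref{thm:hom}(a). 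Since $Y(\cdot)$ is always a wedge of spheres there is no homological cancellation, so $\hd$ is computed exactly by the following process: repeatedly delete the element $r_1$ carrying the current minimum value; if $r_1\prec r_2$ (case~(i)) then $\hd$ is unchanged by~(\ref{e:case1}), while if $r_2\prec r_1$ (case~(ii)) then by~(\ref{e:case2}), $\hd(Y(R,\phi))=\max\{\hd(Y(R',\phi')),\hd(Y(R'',\phi''))+1\}$ with $R''=\{t\succ r_1\}$. Driven by a uniform $\pi$, the position $r_1=\pi^{-1}(1)$ is uniform among the surviving positions and $R''$ carries a conditionally uniform permutation of size $|R''|$, so this is a Markovian recursion whose Betti-number linearisation is exactly the system for $v(n,r,k)$ in Proposition~\ref{p:expb}. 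The plan is to extract its growth rate --- either directly from those recursions or by coupling the deletion process to a tractable fragmentation/branching model with an explicit Malthusian parameter --- and conclude that the case-(ii) increments accumulate to $(\gamma+o(1))\sqrt n$.

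\textbf{Main obstacles.}
The crux is the $\max$ in the case-(ii) recursion: case-(i) steps can freeze $\hd$ while the alternating number $\alpha_n$ keeps growing, so the lower bound requires showing that the homological degree trails $\alpha_n$ by only $o(\sqrt n)$, i.e.\ quantitative control of how often the branches $Y(R',\phi')$ and $\Sigma Y(R'',\phi'')$ tie in top dimension. A separate difficulty is even establishing that $E[\hd(Y([n],\pi))]/\sqrt n$ converges: $\hd$ is not a chain statistic, so the superadditivity underlying the Logan--Shepp--Vershik--Kerov analysis of Theorem~\ref{t:lskv} does not transfer in an obvious way, and one may have to read convergence off the recursion itself. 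I would expect these two points --- not the order-of-magnitude bounds --- to be the real content of the conjecture.
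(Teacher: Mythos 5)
The statement you are addressing is Conjecture~\ref{c:hdim}, which the paper leaves open: the only results it actually establishes in this direction are the upper bound of Proposition~\ref{p:lims} and the recursions of Proposition~\ref{p:expb}, both of which your write-up correctly reconstructs (the reduction to a single uniform $\pi$ via~(\ref{e:unioni}), the bound $\hd(Y([n],\pi))\le\alpha_n(\pi)-1$ from Theorem~\ref{thm:hom}(c), the first-moment estimate $E[\gamma_{n,k}]=\binom{n}{2k}/(2k)!$, and the identification of the case-(i)/(ii) deletion process with the system for $v(n,r,k)$). But what you have written is a research plan, not a proof: none of the three things a proof would require --- convergence of $E[\hd(X_{2,n})]/\sqrt n$, positivity of the limit, and the bound $\gamma<1$ --- is actually established, and you say so yourself. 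So the proposal cannot be credited as a proof of the conjecture.

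Beyond the global incompleteness, one concrete step in your lower-bound sketch would fail as stated. You propose to exhibit, with high probability, $\Theta(\sqrt n)$ blocks of consecutive positions carrying disjoint ranges of values, each containing an inversion, so that $Y([n],\pi)$ contains a join of $\Theta(\sqrt n)$ nontrivial pieces. If the blocks are required to partition $[n]$ with increasing value-ranges (which is what is needed for $Y([n],\pi)$ to literally \emph{be} the join $Y(I_1)*\cdots*Y(I_m)$), then such a decomposition into $m$ blocks exists only when $\pi(\{1,\dots,p\})=\{1,\dots,p\}$ for $m-1$ values of $p$; for a uniform $\pi$ the expected number of such $p$ is $\sum_{p=1}^{n-1}\binom{n}{p}^{-1}=O(1/n)$, so a uniform permutation is indecomposable with high probability and no second-moment argument will produce $\Theta(\sqrt n)$ blocks. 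If instead you only ask for such a configuration as an induced subpermutation on a subset $S\subset[n]$, then $Y(S,\phi|_S)$ is merely a subcomplex of $Y([n],\pi)$, and the homology of a subcomplex does not inject into that of the ambient complex, so $\hd$ is not bounded below this way --- indeed part (c) of Theorem~\ref{thm:hom} only gives sub-structure \emph{upper} bounds. Any genuine lower bound has to go through the exact recursion $\hd(Y(R,\phi))=\max\{\hd(Y(R',\phi')),\hd(Y(R'',\phi''))+1\}$ (or the equivalent system of Proposition~\ref{p:expb}) and control how many case-(ii) increments accumulate along an optimal branch; your own ``main obstacles'' paragraph correctly identifies this, and the convergence of $E[\hd]/\sqrt n$, as the real difficulty, but offers no mechanism for resolving it.
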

\item
By Theorem \ref{t:twoper}, if $\sigma_1,\sigma_2 \in \ccs_n$
then $X(\sigma_1,\sigma_2)$ is homotopic to a wedge of spheres
$S^{k_1} \vee \cdots \vee S^{k_m}$, and hence $\tilde{\chi}(X(\sigma_1,\sigma_2))=\sum_{i=1}^m (-1)^{k_i}$. On the other hand, Proposition \ref{p:echar} implies that
$|E[\tilde{\chi}(X_{2,n})]|=o(1)$. This is of course consistent with the intuitive guess that
for a random $X_{2,n} \in \mathcal{X}_{2,n}$, the decomposition (\ref{e:disp}) should contain about the same number of odd and even spheres.
\item
Our paper deals only with injective words. For recent work on complexes associated with words with letter repetitions, see Kozlov's paper \cite{Kozlov19}.
\end{itemize}

\ \\ \\
{\bf Conflict of Interest Statement}
\\
On behalf of all authors, the corresponding author states that there is no conflict of interest.

\end{document}